\documentclass[11pt,reqno]{amsart}

\usepackage[utf8]{inputenc}
\usepackage[T1]{fontenc}
\usepackage{lmodern}
\usepackage{microtype} 

\usepackage{amsmath,amssymb,amsfonts,amsthm}
\usepackage{mathtools}
\usepackage{mathrsfs}

\usepackage{graphicx}
\usepackage{booktabs} 
\usepackage{array}

\usepackage[margin=1in]{geometry}

\theoremstyle{plain}
\newtheorem{theorem}{Theorem}[section]

\newtheorem{proposition}[theorem]{Proposition}

\theoremstyle{definition}
\newtheorem{definition}[theorem]{Definition}

\newtheorem{remark}[theorem]{Remark}

\numberwithin{equation}{section}

\usepackage{xcolor}
\definecolor{winered}{rgb}{0.5,0,0}
\definecolor{darkblue}{rgb}{0,0,0.6}

\usepackage[
    pdftex,
    colorlinks=true,
    linkcolor=darkblue,
    citecolor=winered,
    urlcolor=darkblue,
    hypertexnames=false 
]{hyperref}

\title[Transmutation operators for SLEIF with an integrable potential]{Transmutation operators for the Sturm--Liouville equation in impedance form with an integrable potential}

\author{Briceyda B. Delgado}
\address{INFOTEC, {Centro de Investigación e Innovación en Tecnologías de la Información  y Comunicación}, Cto. Tecnopolo Sur No. 112, Pocitos, Aguascalientes 20326, Mexico}
\email{briceyda.delgado@infotec.edu.mx}

\author{F. Ay\c{c}a \c{C}etinkaya}
\address{Department of Mathematics, University of Tennessee at Chattanooga, 37403 Chattanooga, TN, USA.}
\email{fatmaayca-cetinkaya@utc.edu}
\date{\today}

\begin{document}

\begin{abstract}
In this work, we study a family of transmutation operators for the Sturm--Liouville operator in impedance form with an integrable potential satisfying a smallness condition. Specifically, we consider transmutations associated with the pair
\[\textbf{L}_r=\frac{1}{r^2 (x)}\frac{d}{dx}r^2 (x)\frac{d}{dx} \quad \text{and} \quad  \textbf{M}=\frac{d^2}{dx^2}.
\]
We extend existing results by showing that a particular transmutation operator on $W^{3,1} (-a,a)$ can be represented in integral form, where the kernel function satisfies a corresponding Goursat problem. We establish the existence of this kernel using two distinct approaches: the classical method of successive approximations and an application of Picard's theorem.
\end{abstract}

\maketitle

\section{Introduction}
A nonzero operator $T$ that intertwines two operators $A$ and $B$, that is, transform $A$ into $B$ via the relation $AT=TB$ is called a transmutation operator. The essential utility of these operators lies in their capacity to establish a rigorous mapping between the solutions of a simplified model equation, typically involving a standard differential operator, and those of a more complex perturbed operator. The foundation of this theory traces back to the pioneering work of Delsarte \cite{Delsarte1938, DelsarteLions1957}, Gelfand and Levitan \cite{Levitan}, Marchenko \cite{Marchenko}, Faddeev \cite{Faddeev}, Carroll \cite{Caroll}, Begehr \cite{Begehr}, and Povzner \cite{povzner1948differential}. More recently, Kravchenko and Sitnik \cite{KravchenkoSitnik2020} provided a comprehensive overview of modern developments in utilizing transmutation operators to solve problems in mathematical physics. Further contemporary advancements regarding the application of these methods to both direct and inverse Sturm–Liouville problems on finite and infinite intervals can be found in \cite{KravchenkoBook, shishkina2020transmutations}. The scope of the transmutation operator method extends well beyond unperturbed second derivatives; it encompasses a broader class of equations, such as singular differential equations governed by the action of the Bessel operator. Classic examples include the classical and generalized Euler–Poisson–Darboux equations investigated by Shishkina and Sitnik \cite{shishkina2017general}, as well as wave-type systems treated via the integral transforms composition method by Fitouhi et al. \cite{ahmed2018applications}. While much of the historical literature relies on mapping back to standard derivatives or singular frameworks, recent advancements have directly targeted the alternative structures of mathematical physics, such as complete systems of solutions and Darboux transformations for Sturm--Liouville equations in impedance form \cite{vicente2026complete}. For a detailed and recent analysis of the transmutation operator associated to the Sturm--Liouville equations in impedance form see \cite{vicente2025transmutation}.

In this paper, we study transmutation operators for Sturm--Liouville equations with integrable coefficients, where the underlying differential equation in impedance form is given by
\begin{equation} \label{1.1-intro} 
    \dfrac{d}{dx}\bigg(p(x)\dfrac{du}{dx}\bigg)+\lambda p(x)u=0, \ x \in I \subset \mathbb{R}, \ \lambda \in \mathbb{C}.
\end{equation}

Equation~\eqref{1.1-intro} is of practical importance. For example, the free longitudinal vibrations of a thin straight rod with cross-sectional area $A(x)$, density $\varrho$ and Young's modulus $E$ are governed by 
\[
(A(x)w'(x))'+\lambda A(x)w(x)=0,
\]
where $\lambda=\varrho w^2 / E$ and $w$ denotes the vibration frequency. Similarly, the free torsional vibrations of a thin straight rod characterized by the second moment of area $J(x)$, density $\varrho$ and shear modulus $G$ satisfy
\[
    (J(x)\theta'(x))'+\lambda J(x)\theta (x)=0,
\]
where $\lambda =\varrho w^2 / G$.
As detailed in \cite[Chapter 10]{Gladwellbook}, there exists a precise one-to-one correspondence $(E,A,\varrho,v) \rightarrow (G,J,\varrho, \theta)$ between the longitudinal and torsional systems, where $v(\xi)=w(x)$ under the coordinate transformation $\xi^\prime(x)=1/A(x)$.

The development and systematic extension of transmutation operator methods can be traced through the frameworks established in \cite{Campos, Benitez, Benitez-II, Makovetsky, Dzarakhokhov, Pskhu}. For classical constant-coefficient and Sturm--Liouville type operators, the intertwining relation for the pair 
\[\textbf{L}=\dfrac{d^2}{dx^2}-q \quad \text{and} \quad  \textbf{M}=\dfrac{d^2}{dx^2}.
\]
is studied in the sense of distributions where $q$ is a locally integrable function \cite{Campos}. In this context, it is shown that a particular transmutation operator can be represented as a Volterra integral operator of the second kind. This structural mapping connects deeply to the concept of $L$-bases, which was first introduced in \cite{lbaseS-first} and later developed in \cite{lbases} to deepen the connection between transmutation operators and $L$-bases. This architecture serves as the direct foundation for the spectral parameter power series (SPSS) method developed in 
\cite{kravchenko2008representation, kravchenko2010spectral}. 

For differential operators involving an impedance framework, the connection between the pair
\[\textbf{L}_r=\dfrac{1}{r^2(x)}\dfrac{d}{dx}r^2 (x)\dfrac{d}{dx} \quad \text{and} \quad  \textbf{M}=\dfrac{d^2}{dx^2}
\]
is investigated in \cite{Benitez} under the assumption that $r \in C^1(I)$. It is shown that the transmutation operator 
\begin{equation} \label{trans-op} 
    \mathbf{T}u(x)=u(x)+\int_{-x}^x K(x,t)u'(t)\, dt,
\end{equation}
is bounded and invertible in this space, where the transmutation kernel satisfies the hyperbolic equation; and hence it is a transmutation operator for the pair $\textbf{L}_r$ and $\textbf{M}$. 

In a broader context, the analytical scope of transmutation operators theory extends to other operator classes and functional settings. For instance, in \cite{Makovetsky}, explicit governing equations for the kernels of transmutation operators within formally self-adjoint representation under the integral sign, which allows for the intertwining of Bessel equations with shifted spectral parameters. Furthermore, the scope of the transmutation has been extended to non-integer systems and explicit functional relations in \cite{Dzarakhokhov} by mapping respective eigenfunctions of integer-order derivatives, Bessel operators, and their fractional powers directly into one another. In a similar fractional setting, \cite{Pskhu} details the construction of transmutations that intertwine operators of fractional differentiation of distributed order, defined via a Lebesgue-Stieltjes measure to include both continuous and discrete parameters. 

The paper is organized as follows. Section \ref{section-2} outlines the necessary preliminaries and function spaces. In Subsection \ref{subsec:approximations}, we apply the classical method of successive approximations to prove the existence of a weak solution to the Goursat problem satisfied by the transmutation kernel $K$, see Proposition \ref{2.1}. Section \ref{section-3} introduces the notions of a standard basis and its associated standard transmutation operator for $\mathbf{L}_r$. Following the approach in \cite{Campos}, we establish that any bounded operator mapping the monomials $\{x^k\}$ into a standard $\mathbf{L}_r$-basis admits a regular integral representation and acts as a standard transmutation operator on $W^{3,1}(-a,a)$ (see Proposition \ref{prep:integral-representation}). Finally, Section \ref{section-4} reformulates the hyperbolic equation \eqref{2.4} satisfied by the transmutation kernel as an evolutionary system, demonstrating the existence and uniqueness of its solutions via Picard’s theorem.

\section{Existence of solutions for integral representations}\label{section-2}
Before establishing our main results, we first define the function spaces that serve as the framework for our analysis. Let $a>0$ and $I=(-a,a)$ be an open interval in $\mathbb{R}$. For $1 \leq p < \infty$, we denote by $L^p (I)$ the space of all Lebesgue measurable functions $f$ on $I$ for which 
\[
\int_I \lvert f(x)\rvert^p dx<\infty
\]
holds. The Sobolev space $W^{k,p}(I)$ is the subspace of functions $f \in L^p (I)$ whose weak derivatives up to order $k$ also belong to $L^p (I)$ where $k\in \mathbb{N}_0=\{0,1,2,3,\cdots\}$. Additionally, $C^k(I)$ denotes the space of functions that are continuously differentiable up to order $k$ on $I$. Expanding the derivative in \eqref{1.1-intro} yields the equivalent potential-form Sturm--Liouville equation
\begin{equation} \label{2.1} 
  -\dfrac{d^2u}{dx^2} + q(x)\dfrac{du}{dx} = \lambda u, \quad \text{where} \quad q(x) = -\dfrac{p'(x)}{p(x)}.
\end{equation}
In \eqref{1.1-intro}, the principal coefficient $p$ is assumed to be strictly non-vanishing on the closure $\overline{I}=[-a,a]$. For a potential function $q\in L^1(I)$ and a chosen reference point $x_0 \in \overline{I}$
integrating this relationship yields
\[
    p(x)=p(x_0) \exp \bigg(-\int^x_{x_0} q(s)\, ds\bigg).
\]
Note that for any scaling factor $\alpha \in \mathbb{C}$, the scaled coefficient $P=\alpha p$ generates the same differential equation and potential as $p$. Consequently, it is structurally convenient to fix a normalized coefficient satisfying $p(x_0)=1$. Under this normalization, the associated impedance function $r$ is defined as
\[
r(x):=exp \bigg(-\dfrac{1}{2} \int^x_{x_0} q(s)\, ds\bigg).
\]
Depending on the specific analytical setting, it is often more advantageous to formulate the problem in terms of the impedance function $r$ rather than the principal coefficient $p$. From this definition, the direct relationship between the impedance function of \eqref{1.1-intro} and the potential function of \eqref{2.1} is given by
\begin{equation} \label{2.2}
q(x):=-2 \dfrac{r'(x)}{r(x)}.
\end{equation}

We define the $p$-\textit{Wronskian} of $\varphi_0, \varphi_1\in W^{2,1}(-a,a)$ by
\begin{equation} \label{2.3}
W_p(\varphi_0, \varphi_1)(x) = \left| 
\begin{matrix} 
\varphi_0(x) & \varphi_1(x) \\ 
p \varphi_0'(x) & p \varphi_1'(x) 
\end{matrix} 
\right| 
= p(x) \left(\varphi_0(x)\varphi_1'(x) - \varphi_1(x)\varphi_0'(x)\right).
\end{equation}
If $p\equiv 1$, this definition reduces to the classical Wronskian $W(\varphi_0,\varphi_1)$. Differentiating \eqref{2.3} directly yields
\[
\dfrac{d}{dx}W_p(\varphi_0,\varphi_1)=(p\varphi_1')'\varphi_0-(p\varphi_0')'\varphi_1.
\]
In particular, if $\varphi_0$ and $\varphi_1$ are solutions of \eqref{2.1}, then $W_p(\varphi_0,\varphi_1)$ is constant.

Following the framework established in \cite{Benitez}, we consider the transmutation operator $\mathbf{T}$ given by \eqref{trans-op}, where the underlying kernel $K(x,t)$ satisfies the hyperbolic equation 
\begin{equation}\label{2.4} 
    \dfrac{\partial^2 K(x,t)}{\partial x^2}-\dfrac{\partial^2 K(x,t)}{\partial t^2}-q(x)\dfrac{\partial K(x,t)}{\partial x}=0, \quad |t|<x<a,
\end{equation}
subject to the Goursat boundary conditions 
\begin{equation}\label{2.6} 
    K(x,x)=1+Ae^{\frac{1}{2}\int_0^x q(s)\, ds},\quad K(x,-x)=Be^{\frac{1}{2}\int_0^x q(s)\, ds}
\end{equation}
with the parameter constraint $B=1+A$. 

By considering the Goursat problem \eqref{2.4}--\eqref{2.6} in the square domain $\overline{\mathcal{R}_a}=[-a,a]\times [-a,a]$ the change of variables $u=\frac{x+t}{2}$ and $v=\frac{x-t}{2}$ maps the governing equation into
\begin{equation}\label{2.7} 
    \dfrac{\partial^2 H(u,v)}{\partial u\partial v}=\frac{1}{2}q(u+v)\left(\dfrac{\partial H(u,v)}{\partial u}+\dfrac{\partial H(u,v)}{\partial v}\right).
\end{equation}
This equation is considered in the diamond-shaped domain $\Omega_a$ with vertices $(\pm a, 0)$ and $(0,\pm a)$. Under this transformation, the Goursat boundary conditions become
\begin{equation}\label{2.8} 
    H(u,0)=1+Ae^{\frac{1}{2}\int_0^x q(s)\, ds},\quad H(v,0)=Be^{\frac{1}{2}\int_0^x q(s)\, ds},
\end{equation}
where $B=1+A.$

Integrating the Goursat problem \eqref{2.7}--\eqref{2.8} yields the equivalent Volterra integral equation
\begin{equation} \label{2.9} 
\begin{aligned}
    H(u,v) = \,& A \exp \left\{\dfrac{1}{2} \int^u_0 q(s)\, ds \right\} + B \exp \left\{\dfrac{1}{2} \int^v_0 q(s)\, ds \right\} - A \\
    & + \dfrac{1}{2}\int^u_0 \int^v_0 q(\alpha+\beta) \left(\dfrac{\partial H (\alpha, \beta)}{\partial \alpha}  + \dfrac{\partial H (\alpha, \beta)}{\partial \beta}\right) d\beta \, d\alpha.
\end{aligned}
\end{equation}
\subsection{Method of successive approximations}\label{subsec:approximations}
The following propositions are essential for proving the uniqueness of the weak solution to the Goursat problem \eqref{2.4}--\eqref{2.6}. The first of these, Proposition \ref{prop. 2.1}, relies on the classical method of successive approximations \cite{Marchenko}.
\begin{proposition}\label{prop. 2.1}
Suppose that $q \in  L^2 (-a,a)$ such that the following conditions are satisfied
\[
\|q\|_{L^1(-a,a)} < 2 \quad \text{and} \quad \|q\|_{L^1(-a,a)}\|q\|_{L^2(-a,a)}<1.
\]
Then, the integral equation \eqref{2.9} has a unique solution $H(u,v) \in W^{1,1}(\Omega_a) \cap C(\Omega_a)$ that satisfies
\begin{equation} \label{2.10} 
\lvert H(u,v)\rvert \leq C,
\end{equation}
where $C$ is a constant depending on $\|q\|_{L^1(-a,a)}$ and $\|q\|_{L^2(-a,a)}$.
\end{proposition}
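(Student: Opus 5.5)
We plan to apply successive approximations directly to the Volterra equation \eqref{2.9}, treating the derivatives $H_u,H_v$ as the main unknowns. Write
\[
H_0(u,v)=A e^{\frac12\int_0^u q}+B e^{\frac12\int_0^v q}-A,
\qquad
H_{n+1}(u,v)=\frac12\int_0^u\!\!\int_0^v q(\alpha+\beta)\bigl(\partial_\alpha h_n+\partial_\beta h_n\bigr)\,d\beta\,d\alpha,
\]
where $h_0=H_0$ and $h_{n}=H_{n}$ for $n\ge 1$. The candidate solution is $H=\sum_{n\ge0}H_n$. Differentiating the recursion gives
\[
\partial_u H_{n+1}(u,v)=\frac12\int_0^v q(u+\beta)\bigl(\partial_u H_n+\partial_v H_n\bigr)(u,\beta)\,d\beta,
\]
and a symmetric formula for $\partial_v H_{n+1}$, with the integral taken in $\alpha$ from $0$ to $u$. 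It is therefore natural to set $\Phi_n(u,v)=|\partial_uH_n|+|\partial_vH_n|$ and to estimate $\Phi_n$ on $\Omega_a$.

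The base case is explicit. We have $\partial_uH_0=\tfrac A2 q(u)e^{\frac12\int_0^u q}$, so $\|\partial_u H_0\|_{L^1}$ is bounded by a multiple of $\|q\|_{L^1}e^{\frac12\|q\|_{L^1}}$, and similarly for $\partial_v H_0$. The constraint $\|q\|_{L^1}<2$ keeps these exponentials below $e$.

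For the inductive step, fix $u$ and take $\sup_u\int|\partial_uH_{n+1}(u,v)|\,dv$ type mixed norms, using that for fixed $u$ the map $\beta\mapsto q(u+\beta)$ has $L^1$ norm at most $\|q\|_{L^1}$. This gives $\|\partial_u H_{n+1}\|\le \tfrac12\|q\|_{L^1}\,\|\Phi_n\|_{L^\infty_uL^1_v\text{-type}}$. The difficulty is that $\partial_vH_n$ enters the $\partial_uH_{n+1}$ formula evaluated along the wrong variable. A pure $L^1$ or $L^\infty$ bound does not close, so we pass through $L^2$: by Cauchy--Schwarz,
\[
\int_0^v|q(u+\beta)||\partial_vH_n(u,\beta)|\,d\beta\le\|q\|_{L^2}\,\|\partial_vH_n(u,\cdot)\|_{L^2}.
\]
The $L^2$ norm of $\partial_v H_n(u,\cdot)$ is in turn controlled by $\|q\|_{L^1}$ times a sup-type norm. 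The outcome we aim for is a contraction factor $\theta=\max\{\tfrac12\|q\|_{L^1},\,\|q\|_{L^1}\|q\|_{L^2}\}<1$, which is exactly what the two hypotheses provide, so that $\|\Phi_{n}\|\le C_0\theta^n$. The key bookkeeping step is to choose the norm
\[
N(F)=\sup_{u}\|F(u,\cdot)\|_{L^2}+\sup_v\|F(\cdot,v)\|_{L^2}
\]
on the derivatives, and to verify that the recursion maps this norm to itself with constant $\theta$. This is the main obstacle: making the mixed norm close with precisely those constants. If it fails, we would instead use an exponentially weighted norm $e^{-\gamma(|u|+|v|)}$, although the stated hypotheses suggest the authors used a direct geometric series.

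With geometric decay established, $\sum\partial_uH_n$ and $\sum\partial_vH_n$ converge in $L^1(\Omega_a)$, since $\Omega_a$ is bounded. The bound
\[
|H_{n+1}(u,v)|\le \tfrac12\|q\|_{L^1}^{1/2}\cdots\le C_0'\theta^n
\]
shows that $\sum H_n$ converges uniformly. Each $H_n$ is continuous by absolute continuity of the Lebesgue integral, so $H\in W^{1,1}(\Omega_a)\cap C(\Omega_a)$, and
\[
|H|\le \|H_0\|_\infty+\frac{C_0'}{1-\theta}=:C,
\]
which depends only on the two norms of $q$. Summing the recursion, and passing to the limit under the integral by dominated convergence, shows that $H$ solves \eqref{2.9}.

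For uniqueness, the difference $D$ of two solutions in this class satisfies the homogeneous version of \eqref{2.9}. The same estimate then gives $N(\nabla D)\le\theta\,N(\nabla D)$, hence $\nabla D=0$ and $D\equiv0$, since $D$ vanishes on the characteristics $u=0$ and $v=0$.
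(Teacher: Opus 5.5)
\textbf{There is a genuine gap.} Your overall plan matches the paper's: successive approximations for \eqref{2.9}, Cauchy--Schwarz for the term where $\mathcal{D}H_n$ is evaluated along the ``wrong'' variable, and a geometric series. But the device meant to make it work, the norm $N$, fails at the very first step. Since $\partial_uH_0(u,v)=\tfrac A2\,q(u)\,e^{\frac12\int_0^u q}$ does not depend on $v$, the slice norm $\|\partial_uH_0(u,\cdot)\|_{L^2}$ is $|q(u)|$ times a factor bounded below by a positive constant for $|u|\le a/2$. Hence $\sup_u\|\Phi_0(u,\cdot)\|_{L^2}=\infty$ whenever $A\neq 0$ and $q$ is essentially unbounded on $(-a/2,a/2)$. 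Symmetrically, $\sup_v\|\Phi_0(\cdot,v)\|_{L^2}=\infty$ when $B\neq0$, and $A$ and $B=1+A$ cannot both vanish. So $N(\Phi_0)=\infty$ for a typical $q\in L^2(-a,a)$, and the bound $N(\Phi_n)\le C_0\theta^n$ has nothing to start from. The contraction estimate, which is the entire content of the proposition, is announced rather than proved, as you acknowledge. An exponential weight does not repair this, because the obstruction is the unboundedness of $q$, not the size of a constant. The same defect affects your uniqueness step: $N(\nabla D)\le\theta N(\nabla D)$ forces $\nabla D=0$ only if $N(\nabla D)<\infty$, and membership in $W^{1,1}(\Omega_a)$ does not give that.

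\textbf{What the paper does instead.} It never puts the derivatives in a norm of this type. It keeps the singular factors explicit through a pointwise bound
\[
|\mathcal{D}H_n(u,v)|\le\alpha_n\bigl(|A||q(u)|+(|A|+1)|q(v)|\bigr)+b_n,
\]
cf.\ \eqref{2.15}.
\begin{enumerate}
\item In the formula for $\mathcal{D}H_{n+1}$, the pieces where $|q(u)|$ (resp.\ $|q(v)|$) is constant along the line of integration reproduce the singular part with factor $\tfrac12\|q\|_{L^1}$.
\item The mixed products, such as $\int_0^{|v|}|q(u+\operatorname{sgn}(v)\beta)|\,|q(\operatorname{sgn}(v)\beta)|\,d\beta$, are bounded by $\|q\|_{L^2}^2$ via Cauchy--Schwarz and feed the bounded part.
\item One more integration bounds $H_{n+1}$ uniformly, and the Weierstrass test gives continuity and \eqref{2.10}.
\end{enumerate}
That is the structure you need.

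\textbf{A caveat on the target constants.} If you carry out this induction carefully, you get $\alpha_{n+1}=\tfrac12\|q\|_{L^1}\alpha_n$ and $b_{n+1}\le\|q\|_{L^1}b_n+\tfrac12(2|A|+1)\|q\|_{L^2}^2\alpha_n$. So $b_n$ is of order $\|q\|_{L^1}^{\,n-1}\|q\|_{L^2}^{2}$: the power of $\|q\|_{L^2}$ does not grow with $n$ as written in \eqref{2.15}. The bounded part is therefore a geometric series in $\|q\|_{L^1}$, and this crude scheme closes for $\|q\|_{L^1}<1$. Trying to make a contraction constant $\max\{\tfrac12\|q\|_{L^1},\|q\|_{L^1}\|q\|_{L^2}\}$ come out from the stated hypotheses is not a reliable target.
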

\begin{proof}
We seek a solution of the form
\begin{equation} \label{2.11} 
H(u,v)=\sum^\infty_{n=0} H_n (u,v)
\end{equation}
where 
\begin{equation} \label{2.12} 
H_0 (u,v)= A \exp \Bigl\{\dfrac{1}{2} \int^u_0 q(s)\, ds \Bigl\} +B \exp \Bigl\{\dfrac{1}{2} \int^v_0 q(s)\, ds\Bigl\}-A,
\end{equation} 
and, for $n\geq 0$,
\begin{equation} \label{2.13} 
H_{n+1} (u,v):= \dfrac{1}{2}\int^u_0 \int^v_0 q(\alpha+\beta) \mathcal{D} H_n (\alpha, \beta)\,d\beta d\alpha,
\end{equation}
with the operator $\mathcal{D}$ defined by 
\[\mathcal{D}f(u,v):= \dfrac{\partial f(u,v)}{\partial u}+\dfrac{\partial f(u,v)}{\partial v}, \quad f \in W^{1,1}(\Omega_a).\]

Using \eqref{2.12}, we compute
\[\mathcal{D}H_0 (u,v)=\dfrac{A}{2} q(u) \exp\Bigl\{\dfrac{1}{2} \int^u_0 q(s)\, ds \Bigl\} +\dfrac{B}{2} q(v) \exp\Bigl\{\dfrac{1}{2} \int^v_0 q(s)\, ds \Bigl\},
\]
which yields the upper bound
\begin{equation}\label{2.14}
\lvert \mathcal{D}H_0 (u,v)\rvert \leq \dfrac{|A|}{2} \lvert q(u)\rvert \exp\{Q_0 (u)\}+\dfrac{|B|}{2} \lvert q(v)\rvert \exp\{Q_0 (v)\},    
\end{equation}
where $Q_0 (x):= \dfrac{1}{2} \int^x_0 \lvert q(s)\rvert ds$ is an increasing function. 

Since $(u,v)\in\Omega_a$ implies $|u|\leq |u|+|v|\leq a$, and $Q_0$ is increasing, we obtain
\[
\begin{split}
\left|\exp\!\left\{\frac{1}{2}\int_0^u q(s)\,ds\right\}\right|
\leq \exp\!\left\{\frac{1}{2}\int_0^{|u|}|q(s)|\,ds\right\}= \exp\!\left\{Q_0(|u|)\right\}
\leq \exp\!\left\{Q_0(|u|+|v|)\right\}
\leq M,
\end{split}
\]
where $M := \exp\{\frac{1}{4}\|q\|_{L^1}\}$. An analogous bound holds for the corresponding exponential term in $v$. 

Keeping $B=1+A$ in mind and using the above bounds, we can continue estimating \eqref{2.14} as follows
\begin{equation}\label{2.17} 
    |\mathcal{D}H_0(u,v)|
    \leq \frac{M}{2}\Bigl(|A|\,\big(|q(u)|+|q(v)|\big) +\,|q(v)|\Bigr).
\end{equation}
Our aim is to prove the uniform convergence of the series in \eqref{2.11}, which will in turn imply the existence of the unique solution of \eqref{2.9}. To do this, it suffices to show that
\[
\lvert H(u,v) \rvert \leq \lvert H_0(u,v) \rvert+\sum_{n=1}^\infty \dfrac{M\big(2 |A|+1\big)}{2^{n+1}}\|q\|_{L^1}^{n-1}\Big(\|q\|_{L^1}^{2}+\big(2^{n-1}-1\big)a \|q\|_{L^2}^{n}\Big).
\]
For this purpose, we show the estimate
\begin{align}\nonumber
\lvert \mathcal{D}H_n(u,v)\rvert
&\leq \frac{M}{2^{n+1}}
\Bigg[\Big(|A||q(u)|+\big(|A|+1\big)|q(v)|\Big)\|q\|_{L^1}^n \\  \label{2.15} 
&\qquad + (2^n-1)\big(2|A|+1\big)\|q\|_{L^1}^{\,n-1}\|q\|_{L^2}^{\,n+1}\Bigg].
\end{align}
is valid for all $(u,v) \in \Omega_a$ and $n \in \mathbb{N}$. 
Observe that, when $n=0$ in \eqref{2.13}, we have
\[
H_1(u,v)=\dfrac{1}{2}\int_0^u \int_0^v q(\alpha+\beta)\mathcal{D}H_0(\alpha,\beta)\,d\beta\,d\alpha,
\]
and thus
\[
\mathcal{D}H_1(u,v)
= \dfrac{1}{2}\int_0^v q(u+\beta)\mathcal{D}H_0(u,\beta)\,d\beta
+ \dfrac{1}{2}\int_0^u q(\alpha+v)\mathcal{D}H_0(\alpha,v)\,d\alpha.
\]
Now, using the geometry of $\Omega_a$, we rewrite the integrals in a form convenient for estimating $\mathcal{D}H_1(u,v)$: 
\begin{align} \nonumber
    |\mathcal{D}H_1(u,v)|
    &\leq \frac{1}{2}\left|\int_0^{|v|} q(u+\operatorname{sgn}(v)\beta)\,\mathcal{D}H_0(u,\operatorname{sgn}(v)\beta)\, d\beta\right| \nonumber\\
    \quad &+\frac{1}{2}\left|\int_0^{|u|} q(\operatorname{sgn}(u)\alpha+v)\,\mathcal{D}H_0(\operatorname{sgn}(u)\alpha,v)\, d\alpha \right| \nonumber\\
    &\leq \frac{1}{2}\int_0^{|v|} \left|q(u+\operatorname{sgn}(v)\beta)\right| \left|\mathcal{D}H_0(u,\operatorname{sgn}(v)\beta)\right|\, d\beta \nonumber\\
    \quad &+\frac{1}{2}\int_0^{|u|} \left|q(\operatorname{sgn}(u)\alpha+v)\right| \left|\mathcal{D}H_0(\operatorname{sgn}(u)\alpha,v)\right|\, d\alpha. \label{2.16}
\end{align}
Substituting \eqref{2.17} into \eqref{2.16}, we obtain
\begin{align}
|\mathcal{D}H_1(u,v)|
&\leq \frac{M}{4}|A||q(u)| \int_0^{|v|} \left|q(u+\operatorname{sgn}(v)\beta)\right|\, d\beta \nonumber\\
&+ \frac{M}{4}(|A|+1)\int_0^{|v|} \left|q(u+\operatorname{sgn}(v)\beta)\right| \left|q(\operatorname{sgn}(v)\beta)\right|\, d\beta \nonumber\\
&+ \frac{M}{4}|A|\int_0^{|u|} \left|q(\operatorname{sgn}(u)\alpha+v)\right| \left|q(\operatorname{sgn}(u)\alpha)\right|\, d\alpha \nonumber\\
&+ \frac{M}{4}(|A|+1)|q(v)|\int_0^{|u|} \left|q(\operatorname{sgn}(u)\alpha+v)\right|\, d\alpha. \label{2.18}
\end{align}
We denote the four integral terms on the right-hand side of \eqref{2.18} by (I) through (IV), respectively. To bound these terms, we use the geometry of the diamond domain $\Omega_a$ which guarantees that for any $(u,v) \in \Omega_a$ the arguments  $u+\operatorname{sgn}(v)\beta$, $\operatorname{sgn}(v)\beta$, $\operatorname{sgn}(u) \alpha+v$, and $\operatorname{sgn}(u) \alpha$ remain within the interval $(-a,a)$. Assuming $q \in L^2 (-a,a)$, we estimate the first and fourth terms by a standard change of variables and direct integration
\[
(I)\leq \frac{M}{4}|A|\|q\|_{L^1}|q(u)|,
\]
and 
\[
(IV)\leq \frac{M}{4}\big(|A|+1\big)\|q\|_{L^1}|q(v)|.
\]
For the remaining terms (II) and (III), each integrand consists of a product of two functions depending on a single variable of integration. Applying the
Cauchy-Schwarz inequality and extending the integration limits to the full domain $(-a,a)$, we obtain
\begin{align*}
(II) &\leq \frac{M}{4}(|A|+1)
\left(\int_0^{|v|} |q(u+\operatorname{sgn}(v)\beta)|^2\,d\beta\right)^{1/2}
\left(\int_0^{|v|} |q(\operatorname{sgn}(v)\beta)|^2\,d\beta\right)^{1/2} \\
&\leq \frac{M}{4}(|A|+1)
\left(\int_{-a}^{a} |q(t)|^2\,dt\right)^{1/2}
\left(\int_{-a}^{a} |q(t)|^2\,dt\right)^{1/2} \\
&= \frac{M}{4}(|A|+1)\,\|q\|_{L^2}^2.
\end{align*}
An identical argument applied to (III) yields 
\[
(III) \leq \frac{M}{4}|A| \|q\|_{L^2}^2.
\]
Combining these estimates, we arrive at the final bound for $|\mathcal{D}H_1(u,v)|$:
\begin{align*} 
|\mathcal{D}H_1(u,v)|
&\leq \frac{M}{4}\|q\|_{L^1} \Bigl(|A|\,|q(u)| + (|A|+1)|q(v)|\Bigr) \\ \nonumber
&\quad + \frac{M(2|A|+1)}{4}\|q\|_{L^2}^2.
\end{align*}
which shows that \eqref{2.15} holds for $n=1$. If we assume that \eqref{2.15} holds for $n$, then it can be shown that it also holds for $n+1$:
\begin{align*}
\lvert \mathcal{D}H_{n+1}(u,v)\rvert
&\leq \frac{M}{2^{n+2}}
\Bigg[\Big(|A||q(u)|+(|A|+1)|q(v)|\Big)\|q\|_{L^1}^{\,n+1}\\
&\qquad + {(2^{n+1}-1)}\big(2|A|+1\big)\|q\|_{L^1}^{\,n}\|q\|_{L^2}^{\,n+2}\Bigg].
\end{align*}
Substituting this last estimate into \eqref{2.13} we have
\begin{align*}
    \lvert H_{n+1}(u,v)\rvert\leq &\dfrac{M}{2^{n+2}}\int_0^{|u|}\int_0^{|v|} \lvert q(\operatorname{sgn}(u)\alpha+\operatorname{sgn}(v)\beta)\rvert \Bigg[\big(\lvert A\rvert \lvert q(\operatorname{sgn}(u)\alpha) \rvert+\big(\lvert A\rvert+1\big) \lvert q(\operatorname{sgn}(v)\beta)\rvert\big) \|q\|_{L^1}^n\\
    &+\big(2^n-1\big)\big(2\lvert A\rvert +1\big)\|q\|_{L^1}^{n-1}\|q\|_{L^2}^{n+1}\Bigg]d\beta d\alpha\\
    =&I_1 +I_2 +I_3,
\end{align*}
where
\[
I_1=\dfrac{M\lvert A \rvert }{2^{n+2}}\|q\|_{L^1}^n \int_0^{|u|}\int_0^{|v|} \lvert q(\operatorname{sgn}(u)\alpha+\operatorname{sgn}(v)\beta) \rvert \,\lvert q(\operatorname{sgn}(u)\alpha) \rvert\, d\beta d\alpha,
\]
\[
I_2=\dfrac{M\big(\lvert A \rvert +1\big)}{2^{n+2}}\|q\|_{L^1}^n \int_0^{|u|}\int_0^{|v|} \lvert q(\operatorname{sgn}(u)\alpha+\operatorname{sgn}(v)\beta) \rvert \,\lvert q(\operatorname{sgn}(v)\beta) \rvert\,d\beta d\alpha,
\]    
and
\[
I_3=\dfrac{M\big(2^n-1\big)\big(2\lvert A\rvert +1\big)}{2^{n+2}}\|q\|_{L^1}^{n-1}\|q\|_{L^2}^{n+1}\int_0^{|u|}\int_0^{|v|} \lvert q(\operatorname{sgn}(u)\alpha+\operatorname{sgn}(v)\beta)\rvert \, d\beta d\alpha.
\]

We begin by evaluating the first integral, $I_1$. Write $s=\operatorname{sgn}(u)$ and $t=\operatorname{sgn}(v)$. Split the double integral by isolating the factor $|q(s\alpha)|$, which does not depend on $\beta$ as follows
\[
\int_0^{|u|}\int_0^{|v|} |q(s\alpha+t\beta)|\,|q(s\alpha)|\,d\beta\,d\alpha
= \int_0^{|u|} |q(s\alpha)|\left(\int_0^{|v|}|q(s\alpha+t\beta)|\,d\beta\right)d\alpha.
\]
For each fixed $\alpha\in[0,|u|]$, the substitution $y=s\alpha+t\beta$ yields
\[
\int_0^{|v|}|q(s\alpha+t\beta)|\,d\beta \;\le\; \int_{-a}^{a}|q(y)|\,dy \;=\; \|q\|_{L^1}.
\]
This bound does not depend on $\alpha$, so it factors out of the outer integral:
\[
\int_0^{|u|}\int_0^{|v|} |q(s\alpha+t\beta)|\,|q(s\alpha)|\,d\beta\,d\alpha \;\le\; \|q\|_{L^1}\int_0^{|u|} |q(s\alpha)|\,d\alpha.
\]
Applying the same substitution argument, now with $z=s\alpha \in (-a,a)$, to the remaining integral gives
\[
\int_0^{|u|}|q(s\alpha)|\,d\alpha \;\le\; \int_{-a}^{a}|q(z)|\,dz \;=\; \|q\|_{L^1}.
\]
Combining both bounds,
\[
\int_0^{|u|}\int_0^{|v|} |q(s\alpha+t\beta)|\,|q(s\alpha)|\,d\beta\,d\alpha \;\le\; \|q\|_{L^1}^{2},
\]
which yields
\[
I_1 \;\le\; \frac{M|A|}{2^{n+2}}\,\|q\|_{L^1}^{\,n+2}.
\]
Similarly, we have
\[
I_2\leq \dfrac{M \big(\lvert A \rvert +1\big)}{2^{n+2}}\|q\|_{L^1}^{n+2}.
\]
We next estimate $I_3$. For $(u,v)\in\Omega_a$ and $q\in L^1(-a,a)$, we bound the double integral directly. For fixed $\alpha$, the substitution $\gamma=s\alpha+t\beta$ gives
\[
\int_0^{|v|}|q(s\alpha+t\beta)|\,d\beta
\le \int_{-a}^{a}|q(\gamma)|\,d\gamma=\|q\|_{L^1(-a,a)},
\]
since the integrand is nonnegative and, because $(u,v)\in\Omega_a$, the image subinterval is contained in $(-a,a)$; in particular this bound does not depend on $|v|$. Integrating in $\alpha$ from $0$ to $|u|$,
\[
\int_0^{|u|}\int_0^{|v|}|q(s\alpha+t\beta)|\,d\beta\,d\alpha
\le |u|\,\|q\|_{L^1(-a,a)}
\le a\,\|q\|_{L^1(-a,a)}.
\]
Therefore,
\[
I_3\leq \dfrac{M\big(2^n-1\big)\big(2\lvert A\rvert +1\big)a}{2^{n+2}}\,\|q\|_{L^1}^{\,n}\|q\|_{L^2}^{n+1}.
\]
By combining the upper bounds computed for $I_1$, $I_2$, and $I_3$, we have for all $n\geq 0$
\begin{align*}
    \lvert H_{n+1}(u,v)\rvert\leq & \dfrac{M\big(2\lvert A\rvert +1\big)}{2^{n+2}}\|q\|_{L^1}^{n+2}\\
    &+\dfrac{M \big(2\lvert A\rvert +1\big)\big(2^n-1\big)a}{2^{n+2}}\|q\|_{L^1}^{n}\|q\|_{L^2}^{n+1}\\
    = &\dfrac{M\big(2\lvert A\rvert +1\big)}{2^{n+2}}\|q\|_{L^1}^{n}\bigg(\|q\|_{L^1}^{2}+\big(2^n-1\big)a\|q\|_{L^2}^{n+1}\bigg):=  M_{n+1}.
\end{align*}

To establish the global bound for $H(u,v)$, we sum the uniform estimates over $n$. Taking the absolute value on both sides of \eqref{2.11} and applying the triangle inequality yields 
\begin{equation} \label{sum}
\lvert H(u,v) \rvert \leq \sum_{n=0}^\infty \lvert H_n(u,v)\rvert.
\end{equation}
Substituting the estimate for $H_n(u,v)$ into \eqref{sum} yields:
\[
\lvert H(u,v) \rvert \leq (2|A|+1)M+|A|+\sum_{n=1}^\infty \dfrac{M\big(2 |A|+1\big)}{2^{n+1}}\|q\|_{L^1}^{n-1}\Big(\|q\|_{L^1}^{2}+\big(2^{n-1}-1\big)a \|q\|_{L^2}^{n}\Big).
\]
Distributing the term outside the brackets yields
\begin{equation} \label{H}
    \lvert H(u,v) \rvert \leq (2|A|+1)M+|A|+S_1+S_2,
\end{equation}
where 
\[
S_1=M\big(2 |A|+1\big)\sum_{n=0}^\infty \Bigg(\dfrac{\|q\|_{L^1}}{2}\Bigg)^{n+1}=\frac{M}{2}\big(2 |A|+1\big)\|q\|_{L^1}\sum_{n=0}^\infty \Bigg(\dfrac{\|q\|_{L^1}}{2}\Bigg)^{n},
\]
and 
\[
S_2=\dfrac{M\big(2 |A|+1\big)a}{4}\sum_{n=0}^\infty \|q\|_{L^1}^{n-1}\|q\|_{L^2}^{n}=\dfrac{M\big(2 |A|+1\big)a}{4\|q\|_{L^1}}\sum_{n=0}^\infty \Big(\|q\|_{L^1}\|q\|_{L^2}\Big)^{n}.
\]

$S_1$ converges absolutely if and only if $\|q\|_{L^1}<2$. Using the geometric series formula, we evaluate
\[
S_1=\frac{M}{2}\big(2 |A|+1\big)\dfrac{\|q\|_{L^1}}{1-\dfrac{\|q\|_{L^1}}{2}}=\dfrac{M\big(2 |A|+1\big)\|q\|_{L^1}}{2-\|q\|_{L^1}}.
\]
$S_2$ converges absolutely if and only if $\|q\|_{L^1}\|q\|_{L^2}<1$, using the geometric series formula gives the exact upper bound for $S_2$ as 
\[
S_2 \leq \dfrac{M\big(2 |A|+1\big)a}{4\|q\|_{L^1}\Big(1-\|q\|_{L^1}\|q\|_{L^2}\Big)}.
\]
Combining the explicit bounds for $S_1$ and $S_2$ back into \eqref{H}, we obtain
\begin{align*}
    \lvert H(u,v) \rvert \leq (2|A|+1)M+|A|+ \dfrac{M\big(2 |A|+1\big)\|q\|_{L^1}}{2-\|q\|_{L^1}}+\dfrac{M\big(2 |A|+1\big)a}{4\|q\|_{L^1}\Big(1-\|q\|_{L^1}\|q\|_{L^2}\Big)}:=C,
\end{align*}
where 
\[
C:= C \Big(M, |A|, \|q\|_{L^1}, \|q\|_{L^2}\Big).
\]
Since the series are bounded by a convergent sequence of numbers $M_n$, the Weierstrass $M$-test guarantees that the series \eqref{2.11} converges uniformly on $\Omega_a$, \eqref{2.10} holds and $H \in C(\Omega_a)$. Also, from \eqref{2.15}, allowing an identical argument of distributing the coefficients and splitting the summation into majorant geometric series, we obtain the global uniform bound for the total derivative as
\begin{equation} \label{DH_global_final}
|\mathcal{D}H(u,v)| \leq \frac{M \Bigl(|A||q(u)| + (|A|+1)|q(v)|\Bigr)}{2 - \|q\|_{L^1}} + \frac{M(2|A|+1)\|q\|_{L^2}}{2\|q\|_{L^1}(1 - \|q\|_{L^1}\|q\|_{L^2})}.
\end{equation}
From this last inequality, it follows that both the function and its generalized derivative are integrable, yielding $H \in W^{1,1}(\Omega_a).$ Combining this with out previous continuity result, we conclude that $H \in W^{1,1}(\Omega_a) \cap C(\Omega_a),$ which completes the proof. 
\end{proof}
\begin{proposition}\label{prop:continuity}
Let $q\in L^1(-a,a)$ such that $\|q\|_{L^1}<2$ and $\|q\|_{L^1}\|q\|_{L^2}<1$, and let $H$ be the unique solution of \eqref{2.9}. Then the mapping $q\mapsto H(u,v)$ is continuous from $L^1(-a,a)$ to $C(\Omega_a)$.
\end{proposition}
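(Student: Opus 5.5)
We plan to prove sequential continuity. Fix an admissible $q$, i.e.\ $\|q\|_{L^1}<2$ and $\|q\|_{L^1}\|q\|_{L^2}<1$, and let $q_k\to q$ in $L^1(-a,a)$ with every $q_k$ admissible. Write $H^{(k)}=\sum_n H^{(k)}_n$ and $H=\sum_n H_n$ for the series \eqref{2.11} built from $q_k$ and $q$ via \eqref{2.12}--\eqref{2.13}. The strategy is a Tannery-type argument in three steps:
\begin{itemize}
\item[(a)] for each fixed $n$, $H^{(k)}_n\to H_n$ uniformly on $\Omega_a$;
\item[(b)] the tails $\sum_{n>N}\sup_{\Omega_a}|H^{(k)}_n|$ are small uniformly in $k$;
\item[(c)] hence $\|H^{(k)}-H\|_{C(\Omega_a)}\le \sum_{n\le N}\|H^{(k)}_n-H_n\|_\infty+2\varepsilon\to 2\varepsilon$.
\end{itemize}

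For (a) we argue by induction on $n$, but the induction cannot be carried out in sup norm. The reason is that $\mathcal{D}H_n$ contains the pointwise factors $q(u)$ and $q(v)$, as seen in \eqref{2.15}. We therefore track the mixed quantity
\[
\delta_n^{(k)}:=\sup_{u}\int|\mathcal{D}H^{(k)}_n-\mathcal{D}H_n|(u,\beta)\,d\beta+\sup_{v}\int|\mathcal{D}H^{(k)}_n-\mathcal{D}H_n|(\alpha,v)\,d\alpha,
\]
where the integrals run over the sections of $\Omega_a$.

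For $n=0$ the explicit formula for $\mathcal{D}H_0$ gives $\delta_0^{(k)}\to0$. Indeed, $\exp\{\tfrac12\int_0^x q_k\}\to\exp\{\tfrac12\int_0^x q\}$ uniformly, with the factors bounded by $M_k:=\exp\{\tfrac14\|q_k\|_{L^1}\}\le e^{1/2}$. Moreover $q_k(u)\to q(u)$ in $L^1$ in the one variable on which it depends. The same uniform convergence of the exponentials gives $\|H^{(k)}_0-H_0\|_\infty\to0$.

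For the inductive step we subtract the recursions \eqref{2.13} for $q_k$ and for $q$ and write
\[
q_k\,\mathcal{D}H^{(k)}_n-q\,\mathcal{D}H_n=(q_k-q)\,\mathcal{D}H^{(k)}_n+q\,\bigl(\mathcal{D}H^{(k)}_n-\mathcal{D}H_n\bigr).
\]
Differentiating as in the computation of $\mathcal{D}H_1$ produces one-dimensional integrals along the lines $\alpha=u$ and $\beta=v$. The substitutions used for $I_1,I_2,I_3$ in Proposition \ref{prop. 2.1} then bound $\delta^{(k)}_{n+1}$ and $\|H^{(k)}_{n+1}-H_{n+1}\|_\infty$ by $C_n\bigl(\|q_k-q\|_{L^1}+\delta^{(k)}_n\bigr)$. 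For terms that pair two $q$-factors at different points, such as (II) and (III) in the proof of Proposition \ref{prop. 2.1}, we replace Cauchy--Schwarz by a bound of the form $\|q_k-q\|_{L^1}\cdot\sup$. If that bound fails, we instead use Cauchy--Schwarz, with $\|q_k\|_{L^2}$ uniformly bounded as explained below. The constant $C_n$ may depend on $n$; this is harmless because $n\le N$ is fixed.

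For (b) we reuse the majorants $M_n$ from the proof of Proposition \ref{prop. 2.1}, now with $q_k$ in place of $q$. These are geometric in $\|q_k\|_{L^1}/2$ and in $\|q_k\|_{L^1}\|q_k\|_{L^2}$. The first ratio is uniformly below $1$ for large $k$, since $\|q_k\|_{L^1}\to\|q\|_{L^1}<2$. For $q\neq0$, admissibility gives $\|q_k\|_{L^2}<1/\|q_k\|_{L^1}\to 1/\|q\|_{L^1}$, so $\|q_k\|_{L^2}$ is eventually uniformly bounded. The case $q=0$ is handled directly, since then $H=H_0$ and each $H^{(k)}_n$ with $n\ge1$ is small.

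The main obstacle is the second geometric ratio. Admissibility only gives $\rho_k:=\|q_k\|_{L^1}\|q_k\|_{L^2}<1$, and $\rho_k$ could accumulate at $1$, which would destroy the uniformity of the tails. We will first try to show that $\limsup_k\rho_k<1$, using $\|q_k\|_{L^1}\to\|q\|_{L^1}$. Failing that, we will read the statement as continuity on each subclass $\{\|q\|_{L^1}\le r<2,\ \|q\|_{L^1}\|q\|_{L^2}\le\rho<1\}$, where the tails are uniform by construction. Any given admissible $q$ lies in such a subclass, and the subclass is the natural domain on which the estimates of Proposition \ref{prop. 2.1} are uniform.
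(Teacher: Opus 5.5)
Your route differs from the paper's, which writes an integral equation for $\tilde H-H$ and aims directly at a Lipschitz bound in $\|\tilde q-q\|_{L^1}$. As written, however, both steps of your scheme have a genuine gap.

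In step (a), $\delta_n^{(k)}$ puts the supremum and the integral in the wrong variables. The two terms of $\mathcal{D}H_0$ are $\frac{A}{2}q(u)e^{\frac12\int_0^u q}$ and $\frac{B}{2}q(v)e^{\frac12\int_0^v q}$. In $\sup_u\int|\cdot|(u,\beta)\,d\beta$ the first term is constant in $\beta$. So $\delta_0^{(k)}$ contains, up to an $o(1)$ error, the quantity $|A|\sup_u(a-|u|)\bigl|q_k(u)e^{\frac12\int_0^u q_k}-q(u)e^{\frac12\int_0^u q}\bigr|$. This is an $L^\infty$-type quantity, typically infinite (e.g.\ for unbounded $q$ and bounded $q_k$), and $L^1$ convergence never forces it to vanish. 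The $q(v)$-term does the same in the second summand, and since $B=1+A$ at least one of them is present. Hence $\delta_0^{(k)}\to0$ is false.

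The induction fails for the same reason. With $\Delta_n:=\mathcal{D}H^{(k)}_n-\mathcal{D}H_n$, bounding $\int_0^{|v|}|q(u\pm\beta)|\,|\Delta_n(u,\pm\beta)|\,d\beta$ by $\int|\Delta_n(u,\beta)|\,d\beta$ requires $q\in L^\infty$. For the cross terms of type (II)/(III) involving $q_k-q$, neither of your fallbacks gives smallness pointwise in $u$. The sup bound needs $q_k\in L^\infty$, and Cauchy--Schwarz gives $\|q_k-q\|_{L^2}\|q_k\|_{L^2}$, which need not tend to $0$ (see below).

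What works is the reverse bookkeeping: dominate $\Delta_n$ by $f(u)+g(v)$ with $f,g\in L^1(-a,a)$. That is, take $L^1$ in the variable carrying the singular factor and the supremum in the other. With this choice:
\begin{itemize}
\item the line integrals in $\mathcal{D}H_{n+1}$ map such a majorant to one of the same form, with total $L^1$-mass multiplied by at most $\|q\|_{L^1}$; the correlation terms are controlled by Tonelli, $\int\!\int|q(u+\beta)|\,|g(\beta)|\,d\beta\,du\le\|q\|_{L^1}\|g\|_{L^1}$, not by Cauchy--Schwarz;
\item the $(q_k-q)$-terms are $O(\|q_k-q\|_{L^1})$;
\item the double integral \eqref{2.13} turns such a majorant into the sup bound $\frac12\|q\|_{L^1}(\|f\|_{L^1}+\|g\|_{L^1})$.
\end{itemize}

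In step (b), the obstacle you flag is real, so your first plan cannot succeed. Take an admissible $q\neq0$, sets $E_k$ with $|E_k|\to0$, and $q_k=q+h_k\chi_{E_k}$. Choose $h_k>0$ by the intermediate value theorem so that $\|q_k\|_{L^1}\|q_k\|_{L^2}=1-\frac1k$. Then $\|q_k\|_{L^2}\le 1/\|q\|_{L^1((-a,a)\setminus E_k)}$ stays bounded. Hence $\|q_k-q\|_{L^1}=|E_k|^{1/2}\|q_k-q\|_{L^2}\to0$, and the $q_k$ are admissible for large $k$. Yet $\rho_k\to1$, and $\liminf_k\|q_k-q\|_{L^2}\ge\|q\|_{L^1}^{-1}-\|q\|_{L^2}>0$.

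Your fallback therefore proves only continuity along sequences confined to a fixed subclass. Every $L^1$-neighbourhood of an admissible $q$ contains admissible functions outside any such subclass, so this is strictly weaker than the proposition.

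The paper's proof does not supply the missing ingredient either. The constant in \eqref{bound_M_0} contains $(1-\|\tilde q\|_{L^1}\|\tilde q\|_{L^2})^{-1}$, although $C^*$ is then stated to depend only on $L^1$ norms. Also, $C_{\mathcal{D}}=\sup|\mathcal{D}M|$ is finite only for bounded potentials, the same sup/$L^1$ mismatch as in your $\delta$.

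The needed idea is to remove $\|\cdot\|_{L^2}$ from the argument entirely, using the $f(u)+g(v)$ majorants. All constants then depend only on $\|q_k\|_{L^1}\to\|q\|_{L^1}$, which gives tails uniform in $k$ immediately when $\|q\|_{L^1}<1$. For $1\le\|q\|_{L^1}<2$ this crude factor no longer suffices, and you would have to use the Volterra structure of the iteration.
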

\begin{proof}
The proof follows along the lines of \cite[Prop.\ 20]{Campos} and establishes continuity by deriving an explicit bound on the difference between two solutions in terms of the $L^1$-distance between their corresponding potentials. 

Let $H(u,v)$ and $\tilde{H}(u,v)$ denote the unique solutions to \eqref{2.9} corresponding to the potentials $q, \tilde{q}\in L^1 (-a,a)$, respectively, where both potentials satisfy the constraints $\|\tilde{q}\|_{L^1}, \ \|q\|_{L^1}<2$ and $\|\tilde{q}\|_{L^1}\|\tilde{q}\|_{L^2}, \ \|q\|_{L^1}\|q\|_{L^2}<1$.

We define the difference function $M(u,v)=\tilde{H}(u,v)-H(u,v).$ Subtracting the integral equation \eqref{2.9} for $H(u,v)$ from that of $\tilde{H}(u,v)$, we find that $M(u,v)$ is the unique solution of the integral equation
\begin{equation} \label{eq:M_integral}
M(u,v)= M_0 (u,v)+\dfrac{1}{2}\int^u_0 \int^v_0 \tilde{q}(\alpha+\beta)\mathcal{D}M(\alpha,\beta)\, d\beta d\alpha
\end{equation}
where
\begin{align} \nonumber 
M_0 (u,v)= & A \Bigg(\exp \Bigl\{\dfrac{1}{2} \int^u_0 \tilde{q}(s)\,ds \Bigl\}-\exp \Bigl\{\dfrac{1}{2} \int^u_0 q(s)\, ds \Bigl\}\Bigg)\\ \nonumber 
&+B\Bigg(\exp \Bigl\{\dfrac{1}{2} \int^v_0 \tilde{q}(s)\,ds \Bigl\}-\exp \Bigl\{\dfrac{1}{2} \int^v_0 q(s)\, ds \Bigl\}\Bigg)\\ &+ \dfrac{1}{2}\int^u_0 \int^v_0 \big(\tilde{q}(\alpha+\beta)-q(\alpha+\beta) \big)\mathcal{D}M(\alpha,\beta)\, d\beta d\alpha. \label{M_00} 
\end{align}
Using the successive approximation argument from Proposition \ref{2.1}, the solution to \eqref{eq:M_integral} can be expressed as a uniformly convergent series
\[
M(u,v)= \sum_{n=0}^\infty M_n(u,v),
\]
where the base term is given by $M_0(u,v)$ and, subsequent iterations satisfy
\[
M_{n+1}=\dfrac{1}{2}\int^u_0 \int^v_0 \tilde{q}(\alpha+\beta)\mathcal{D}M_{n}(\alpha,\beta)\, d\beta d\alpha.
\]
By applying the same geometric series majorization as in the proof of Proposition \ref{2.1} to the potential $\tilde{q}$, we find

\begin{equation} \label{bound_M_0}  
\lvert M (u,v) \rvert \leq \|M_0\|_{C(\Omega_a)}+\frac{ \|M_0\|_{C(\Omega_a)} \|\tilde{q}\|_{L^1}}{2 - \|\tilde{q}\|_{L^1}} + \frac{a \|M_0\|_{C(\Omega_a)}}{4 \|\tilde{q}\|_{L^1} \big(1 - \|\tilde{q}\|_{L^1} \|\tilde{q}\|_{L^2} \big)}. 
\end{equation}
To bound $\|M_0\|_{C(\Omega_a)}$, we analyze the terms in \eqref{M_00}. Using the Lipschitz continuity of the exponential function, we start by bounding the first two terms in \eqref{M_00} as follows
\begin{align*}
  \Bigg|\exp \Bigl\{\dfrac{1}{2} \int^u_0 \tilde{q}(s)\,ds \Bigl\}-\exp \Bigl\{\dfrac{1}{2} \int^u_0 q(s)\, ds \Bigl\}\Bigg|&\leq \frac{1}{2}\exp 
  \Bigl\{\max \Bigl\{ \dfrac{1}{2} \int^u_0 \tilde{q}(s)\,ds, \dfrac{1}{2} \int^u_0 q(s)\, ds \Bigl\}\Bigl\}\, \|\tilde{q}-q\|_{L^1(-a,a)}\\
  &\leq \frac{1}{2}\exp 
  \Bigl\{ \dfrac{1}{2}\max \Bigl\{  \|q\|_{L^1} ,   \|\tilde{q}\|_{L^1}  \Bigl\}\Bigl\}\, \|\tilde{q}-q\|_{L^1(-a,a)}
\end{align*}
Analogously, for the second terms in \eqref{M_00}, we get
\begin{align*}
  \Bigg|\exp \Bigl\{\dfrac{1}{2} \int^v_0 \tilde{q}(s)\,ds \Bigl\}-\exp \Bigl\{\dfrac{1}{2} \int^v_0 q(s)\, ds \Bigl\}\Bigg|&\leq \frac{1}{2}\exp 
  \Bigl\{\dfrac{1}{2}\max \Bigl\{  \|q\|_{L^1} ,   \|\tilde{q}\|_{L^1}  \Bigl\}\Bigl\}\, \|\tilde{q}-q\|_{L^1(-a,a)}.
\end{align*}

For the third term in \eqref{M_00}, we use the boundedness of $\lvert \mathcal{D}M(u,v)\rvert$ on $\Omega_a$, as established in \eqref{DH_global_final}. Defining $C_{\mathcal{D}}:=\sup_{(\alpha, \beta)\in \Omega_a}\lvert \mathcal{D}M (\alpha, \beta)\rvert$, we get that
\begin{align*}
\left|\int^u_0 \int^v_0 \big(\tilde{q}(\alpha+\beta)-q(\alpha+\beta) \big)\mathcal{D}M(\alpha,\beta)\, d\beta d\alpha\right| &\leq
C_{\mathcal{D}}\int^u_0 \int^v_0 \big|\tilde{q}(\alpha+\beta)-q(\alpha+\beta) \big|\, d\beta d\alpha\\
&\leq C_{\mathcal{D}} a \|\tilde{q}-q\|_{L^1}(-a,a).
\end{align*}

Let us define $M_{\text{max}}:= \exp\Bigl\{\dfrac{1}{2} \max \big(\|q\|_{L^1}, \|\tilde{q}\|_{L^1}\big)\Bigl\}$. Combining these intermediate results and using the fact that $B=1+A$, we arrive at the following bound for $\|M_0\|_{C(\Omega_a)}$
\begin{equation} \label{M_0}
\|M_0\|_{C(\Omega_a)}\leq \Bigg(\dfrac{M_{\text{max}}}{2}\Big(2|A|+1\Big)+\dfrac{C_{\mathcal{D}}}{2}a\Bigg)\|\tilde{q}-q\|_{L^1(-a,a)}.
\end{equation}
Finally, substituting the bound \eqref{M_0} into \eqref{bound_M_0}, we conclude that there exists a positive constant
\[
C^* = C^*(|A|, a, \|q\|_{L^1}, \|\tilde{q}\|_{L^1})
\]
such that
\begin{equation*}
|\tilde{H}(u,v) - H(u,v)| \leq C^* \|\tilde{q}-q\|_{L^1}
\end{equation*}
uniformly for all $(u,v)\in \Omega_a$. Consequently, as $\|\tilde{q}-q\|_{L^1} \to 0$, we have $\|\tilde{H}-H\|_{C(\Omega_a)} \to 0$, which establishes the continuity of the solution mapping.
\end{proof}

\section{Representation results of the transmutation operator for SLEIF}\label{section-3}
Now we rewrite equation \eqref{1.1-intro} in terms of $r^2(x)=p(x)$ as follows
\begin{equation} \label{3.1} 
    \dfrac{d}{dx}\bigg(r^2(x)\dfrac{du}{dx}\bigg)+\lambda r^2(x)u=0, \ x \in I, \ \lambda \in \mathbb{C}.
\end{equation}
From now on, we assume $r(0)=1$ by choosing $x_0=0$. Then,
\begin{equation} \label{3.2}\
r(x)=exp \bigg(-\dfrac{1}{2} \int^x_{0} q(s)\, ds\bigg).  
\end{equation}
Notice that, if $q\in L^{1}(-a, a)$, then $r,r^{-1}\in L^{\infty}(-a,a)$. Indeed, absolute integrability bounds the exponent uniformly: $\left|\int_0^x q(t)\,dt\right| \le \int_{-a}^a |q(t)|\,dt =: C$. Hence $e^{-C/2} \le r(x)\le e^{C/2}$ for all $x \in (-a,a)$, keeping $r$ bounded both above and away from zero, and therefore $r^{-1}$ is bounded as well. Moreover, by \eqref{2.2}, if $q\in L^{1}(-a, a)$, then $r\in W^{1,1}(-a, a)$.

Next, consider the second-order differential operator
\begin{equation}\label{3.3} 
\mathbf{L}_r=\dfrac{1}{r^2(x)}\dfrac{d}{dx}r^2(x)\dfrac{d}{dx}.
\end{equation}
In \cite[Th.\ 11]{Benitez}, the transmutation property for the operators $\mathbf{L}_r$ and $\frac{d^2}{dx^2}$ in $C^3[-a,a]$ was established; namely,
\begin{align}\label{3.4} 
  \mathbf{L}_r \mathbf{T}_ru(x)=\mathbf{T}_r\dfrac{d^2}{dx^2}u(x),
\end{align}
where the transmutation operator is an integro-differential operator given by
\begin{align*}
      \mathbf{T}_ru(x)=u(x)-\int_{-x}^x K_r(x,t)u'(t)\, dt,
\end{align*}
and $K_r(x,t)=H(\frac{x+t}{2},\frac{x-t}{2})$ is the unique solution of \eqref{2.9} with $A=-1$, $B=0$.

In \cite[Rmk.\ 16]{Benitez}, using the transmutation relation \eqref{3.4}, it is observed that the images of $\{\varphi_k\}$ under the operator $\mathbf{L}_r$ defined in \eqref{3.3} satisfy the recursive relations
\begin{align}
  \mathbf{L}_r[\varphi_k]&=0, \qquad \qquad \qquad \quad \ k=0,1 \label{3.5}\\
  \mathbf{L}_r[\varphi_k]&=k(k-1)\varphi_{k-2}, \qquad k\geq 2\label{3.6}.
\end{align}
For the remainder of this section, we assume that the domain of $\mathbf{L}_r$ is $W^{3,1}(-a,a)$ and that $q\in L^1(-a,a)$. Following \cite{Campos, Benitez}, we introduce the following definition.
\begin{definition}
We say that a family of functions $\{\varphi_k\}$ satisfying \eqref{3.5} and \eqref{3.6} in the weak sense is an $\mathbf{L}_r$-base. Furthermore, we define:
\begin{enumerate}
    \item A \emph{standard} $\mathbf{L}_r$-base is an $\mathbf{L}_r$-base $\{\varphi_k\}$ such that $\varphi_k(0)=\varphi_k'(0)=0$ for all $k\geq 2$.   
    \item A \emph{standard transmutation operator} associated with $\mathbf{L}_r$ is a transmutation operator $T$ satisfying \eqref{3.4} on $W^{3,1}(-a,a)$ such the images of the monomials defined by $\varphi_k(x):=T[x^k]$ form a standard $\mathbf{L}_r$-base.
\end{enumerate}
\end{definition}

Now, let $\varphi_0$ and $\varphi_1$ be two linearly independent weak solutions of \eqref{3.5}. That is, 
\begin{equation}\label{3.7}
\varphi_i''=-2\frac{r'}{r}\varphi_i', \qquad i=0,1. 
\end{equation}
We introduce the modified kernel 
\begin{equation}\label{3.8}
G_p(x,s)=\frac{\varphi_0(s)\varphi_1(x)-\varphi_0(x)\varphi_1(s)}{W_p(\varphi_0,\varphi_1)},    
\end{equation}
where $W_p$ is the $p$-Wronskian defined in \eqref{2.3}. Note that when $p\equiv 1$, 
the kernel in \eqref{3.8} coincides with $G(x,s)$ used in \cite{Campos}.
\begin{proposition}\label{prop:recursive}
A standard $\mathbf{L}_r$-base $\{\varphi_k\}$ is completely determined by its first two elements. Moreover,
\begin{align}\label{3.9}
\varphi_k(x)=k(k-1)\int_0^{\int_0^x p(s)\, ds} G_p(x,s) \varphi_{k-2}(s)\, ds, \quad \forall k\geq 2,
\end{align}
where the kernel $G_p$ is defined in \eqref{3.8}.
\end{proposition}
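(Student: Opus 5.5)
The plan is to read \eqref{3.6} as an inhomogeneous second-order equation in divergence form and solve it by variation of parameters with zero Cauchy data at $x=0$. Uniqueness for the resulting Cauchy problem will give the first claim, and the explicit solution formula will give \eqref{3.9}. Since $r^2=p$, the relation $\mathbf{L}_r[\varphi_k]=k(k-1)\varphi_{k-2}$ is equivalent to
\[
\bigl(p\,\varphi_k'\bigr)'=k(k-1)\,p\,\varphi_{k-2},\qquad \varphi_k(0)=\varphi_k'(0)=0,\quad k\ge 2.
\]
Here the initial conditions come from the definition of a standard $\mathbf{L}_r$-base. Because $q\in L^1(-a,a)$, the coefficient $p=r^2$ lies in $W^{1,1}(-a,a)$ and is bounded above and away from zero, as noted after \eqref{3.2}. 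So the problem is equivalent to a first-order system for $(\varphi_k,\,p\varphi_k')$ with bounded coefficients and an integrable right-hand side.

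\textbf{Uniqueness.} Suppose $\{\varphi_k\}$ and $\{\tilde\varphi_k\}$ are standard $\mathbf{L}_r$-bases with $\varphi_0=\tilde\varphi_0$ and $\varphi_1=\tilde\varphi_1$. I will argue by induction on $k$, in steps of two from $k=0$ and $k=1$. If $\varphi_{k-2}=\tilde\varphi_{k-2}$, then $w=\varphi_k-\tilde\varphi_k$ satisfies $(pw')'=0$ weakly with $w(0)=w'(0)=0$. Hence $pw'$ is constant, and that constant is $p(0)w'(0)=0$. Since $p$ does not vanish, $w'=0$, and so $w\equiv0$. This shows that the whole base is determined by $\varphi_0$ and $\varphi_1$.

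\textbf{Formula.} For the explicit representation, $\varphi_0,\varphi_1$ are linearly independent solutions of \eqref{3.7}. By the computation after \eqref{2.3}, $W_p(\varphi_0,\varphi_1)$ is a nonzero constant, so $G_p$ in \eqref{3.8} is well defined. For $f\in L^1$ I set $u(x)=\int_0^x G_p(x,s)\,p(s)f(s)\,ds$ and check the following.
\begin{itemize}
\item Since $G_p(x,x)=0$, differentiating under the integral gives $u'(x)=\int_0^x \partial_xG_p(x,s)\,p(s)f(s)\,ds$, and in particular $u(0)=u'(0)=0$.
\item For the next derivative, $p(x)\,\partial_xG_p(x,s)=\bigl(\varphi_0(s)\,p\varphi_1'(x)-\varphi_1(s)\,p\varphi_0'(x)\bigr)/W_p$. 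The factors $p\varphi_i'$ are constant because $(p\varphi_i')'=0$.
\item Differentiating once more therefore leaves only the boundary term: $(pu')'(x)=W_p\,p(x)f(x)/W_p=p(x)f(x)$.
\end{itemize}
Taking $f=k(k-1)\varphi_{k-2}$ and using uniqueness identifies $u$ with $\varphi_k$. This gives \eqref{3.9}, with the integration against the weighted measure $p(s)\,ds$ on $[0,x]$, which is how I interpret the notation in the statement. The induction starts from $\varphi_0,\varphi_1$, and each step recovers $\varphi_k$ from $\varphi_{k-2}$.

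\textbf{Main obstacle.} The hard part is justifying these manipulations at the low regularity $q\in L^1$. In particular I must check that $p\varphi_i'$ is absolutely continuous and genuinely constant, not just almost everywhere, for weak solutions of \eqref{3.7}. I also need the differentiation of the parameter integral to be valid. I would handle both through absolute continuity: $\varphi_i\in W^{2,1}$ and $p\in W^{1,1}$, so products are absolutely continuous. I would then write $u$ as a finite sum $\frac{1}{W_p}\bigl(\varphi_1(x)\int_0^x\varphi_0pf-\varphi_0(x)\int_0^x\varphi_1pf\bigr)$. The product rule for absolutely continuous functions then applies directly, with no differentiation under the integral sign needed.
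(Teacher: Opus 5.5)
Your proposal is correct and follows essentially the same route as the paper: take the right-hand side of \eqref{3.9}, verify that it solves $\mathbf{L}_r u = k(k-1)\varphi_{k-2}$ with $u(0)=u'(0)=0$, and conclude by uniqueness of this initial value problem. Your reading of the upper limit $\int_0^x p(s)\,ds$ as integration against $p(s)\,ds$ over $[0,x]$ is exactly what the paper's own differentiation uses (it produces the boundary term $p(x)G_p'(x,x)\varphi_{k-2}(x)=\varphi_{k-2}(x)$), and your explicit uniqueness induction, together with your use of the constancy of $p\varphi_i'$ (equivalent to the paper's $G_p''=-2\frac{r'}{r}G_p'$), supplies rigor for steps the paper leaves implicit.
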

\begin{proof}
Let $\Phi_k(t)$ denote the right-hand side of \eqref{3.9}. By computing the first and second derivatives of \eqref{3.8}, we obtain
\begin{equation}\label{3.10}
G_p'(x,s)=\frac{\varphi_0(s)\varphi_1'(x)-\varphi_0'(x)\varphi_1(s)}{W_p(\varphi_0,\varphi_1)},\quad
    G_p^{''}(x,s)=-2\frac{r'(x)}{r(x)}G'(x,s).
\end{equation}
Since $G_p(x,x)=0$, differentiating the right-hand side of \eqref{3.9} yields
\begin{align*}
\Phi_k'(x)=k(k-1)\int_0^{\int_0^x p(s)\, ds} G_p'(x,s)\, \varphi_{k-2}(s)\, ds.
\end{align*}
By \eqref{3.10}, it follows that 
\begin{equation} \label{3.11}
G_p'(x,x)=\dfrac{1}{p(x)}=\dfrac{1}{r^2 (x)}.
\end{equation}
Differentiating \eqref{3.9} once more, we obtain
\begin{align*}
\Phi_k''(x)&=k(k-1)\left(p(x)G_p'(x,x) \varphi_{k-2}(x)+\int_0^{\int_0^x p(s)\, ds} G_p''(x,s)\, \varphi_{k-2}(s)\, ds\right)\\
&=k(k-1)\left(\varphi_{k-2}(x)-\frac{2r'(x)}{r(x)}\int_0^{\int_0^x p(s)\, ds} G_p'(x,s) \varphi_{k-2}\, ds\right).
\end{align*}
Hence, $\Phi_k$ satisfies \eqref{3.6} and the initial conditions $\Phi_k(0)=\Phi_k'(0)=0$ for all $k\geq 2$. By uniqueness of the corresponding initial value problem for \eqref{3.6}, we conclude that \eqref{3.9} holds.
\end{proof}
Following \cite[Theorem 10]{Campos}, we derive the following integral representation for the operator $T$. 
\begin{proposition}\label{prep:integral-representation}
Let $\{\varphi_k\}$ be a standard $\mathbf{L}_r$-base, and let $T\colon W^{1,1}(-a,a)\to L^1(-a,a)$ be a bounded linear operator such that $T[x^k]=\varphi_k$ for all $ k\geq 0$. Then
\begin{enumerate}
\item[(i)] The operator $T$ admits the integral representation
\begin{equation}\label{3.12}
    T[u](x)=u(0)\varphi_0(x)+u'(0)\varphi_1(x)+\int_0^{\int_0^x p(s)\, ds} G_p(x,s)\, T[u''](s)\, ds,
\end{equation}
for all $u\in W^{3,1}(-a,a)$.
\item[(ii)] The operator $T$ is a standard transmutation operator associated with $\mathbf{L}_r$ on $W^{3,1}(-a,a)$.
\end{enumerate}
\end{proposition}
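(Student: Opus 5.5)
The plan is to follow the density argument of \cite[Theorem 10]{Campos}. First I verify \eqref{3.12} on polynomials, then I extend it to $W^{3,1}(-a,a)$ by continuity, and finally I read off the transmutation property from the integral representation.

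\emph{Step 1 (monomials).} For $u(x)=x^k$, the right-hand side of \eqref{3.12} gives the following.
\begin{itemize}
\item[] For $k=0$ it gives $\varphi_0$, and for $k=1$ it gives $\varphi_1$, since $u''=0$ in both cases.
\item[] For $k\geq 2$ we have $u(0)=u'(0)=0$ and $T[u'']=k(k-1)T[x^{k-2}]=k(k-1)\varphi_{k-2}$. The right-hand side then equals $k(k-1)\int G_p(x,s)\varphi_{k-2}(s)\,ds$, which is $\varphi_k=T[x^k]$ by Proposition \ref{prop:recursive}.
\end{itemize}
By linearity, \eqref{3.12} holds for every polynomial.

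\emph{Step 2 (density).} Polynomials are dense in $W^{3,1}(-a,a)$. To see this, approximate $u'''\in L^1$ by polynomials $P_n$ in $L^1$ and integrate three times with the Taylor data of $u$ at $0$. Take polynomials $u_n\to u$ in $W^{3,1}$. Then:
\begin{itemize}
\item[] $u_n(0)\to u(0)$ and $u_n'(0)\to u'(0)$, by the embedding $W^{1,1}\hookrightarrow C$.
\item[] $T[u_n]\to T[u]$ in $L^1$, and $T[u_n'']\to T[u'']$ in $L^1$, because $u_n''\to u''$ in $W^{1,1}$ and $T$ is bounded.
\item[] The integral term depends continuously on $T[u'']$. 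Indeed, $G_p$ is bounded on $[-a,a]^2$, because $\varphi_0,\varphi_1\in W^{2,1}\subset C^1$ and $W_p$ is a nonzero constant. The integration variable is the Liouville-type variable $\int_0^x p$, with $p=r^2$ bounded above and below by the remark after \eqref{3.2}. So the map $f\mapsto\int_0^{\int_0^x p}G_p(x,s)f(s)\,ds$ is bounded from $L^1$ into $C\subset L^1$.
\end{itemize}
Passing to the limit in the $L^1$ (a.e.) sense gives (i).

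\emph{Step 3 (transmutation property, part (ii)).} Write $f=T[u'']$. I differentiate \eqref{3.12} exactly as in the proof of Proposition \ref{prop:recursive}, using $G_p(x,x)=0$, \eqref{3.10} and \eqref{3.11}. This gives, in the weak sense,
\[
(r^2 (T u)')'=r^2\,(\text{integral term})''+2rr'(\text{integral term})'=r^2 f,
\]
because $\varphi_0,\varphi_1$ solve \eqref{3.7}. Hence $\mathbf{L}_r T[u]=T[u'']$, which is \eqref{3.4}. The justification in the weak sense uses that $f\in W^{1,1}$ when $u\in W^{3,1}$: a slight regularity issue, since $T$ only maps into $L^1$. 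It suffices that $f\in L^1$, because the formula then shows that $r^2(Tu)'$ is absolutely continuous.

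Standardness is immediate. $T[x^k]=\varphi_k$ is a standard $\mathbf{L}_r$-base by hypothesis, and $T\neq 0$ since $T[1]=\varphi_0\neq 0$.

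The main obstacle is making sense of the upper limit $\int_0^x p$ together with the kernel $G_p$. The derivative computations in Proposition \ref{prop:recursive} implicitly treat the upper limit as $x$, so $G_p'(x,x)=1/p(x)$ must be reconciled with the chain-rule factor $p(x)$. I would handle this by carrying out all differentiations under the same convention used in Proposition \ref{prop:recursive}, so that Step 3 reduces verbatim to that computation. I would also check that the boundedness of the integral operator in Step 2 holds whichever interpretation is adopted.
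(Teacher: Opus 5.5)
Your proposal is correct and follows essentially the same route as the paper. For (i), you verify \eqref{3.12} on monomials through \eqref{3.9} and extend it by density of polynomials and boundedness of $T$, which is the argument of \cite[Th.\ 10]{Campos}. For (ii), you differentiate \eqref{3.12} twice using $G_p(x,x)=0$, $p(x)G_p'(x,x)=1$, \eqref{3.7} and \eqref{3.10}.

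The upper-limit ambiguity you flag is resolved by reading the integral as $\int_0^x G_p(x,s)\,T[u''](s)\,p(s)\,ds$. This is the form the paper itself writes in its proof of (ii). Under this reading, your boundary-term computations, the absolute continuity of $r^2(T[u])'$ when $T[u'']\in L^1$, and the $L^1\to C$ boundedness in Step 2 all hold.
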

\begin{proof}
The proof of (i) follows from \cite[Th.\ 10]{Campos} using \eqref{3.9} and the density of polynomials in $W^{3,1}(-a,a)$. To prove (ii), we verify directly that $T$ satisfies the transmutation property \eqref{3.4}.  Since $G_p(x,x)=0$, differentiating the right-hand side of \eqref{3.12} yields
\begin{align*}
T'[u](x)=u(0)\varphi_0'(x)+u'(0)\varphi_1'(x)+\int_0^{\int_0^x p(s)\, ds} G_p'(x,s)\, T[u''](s)\, ds.
\end{align*}
Therefore, the second derivative of \eqref{3.12} is
\begin{align} \nonumber
T''[u](x)&=u(0)\varphi_0''(x)+u'(0)\varphi_1''(x)\\ \label{3.13} 
&+p(x)G_p'(x,x) T[u'']+\int_0^{\int_0^x p(s)\, ds} G_p''(x,s)\, T[u''](s)\, ds.
\end{align}
Substituting \eqref{3.11} into \eqref{3.13}, we get 
\begin{align*}
T''[u](x)&=u(0)\varphi_0''(x)+u'(0)\varphi_1''(x)\\
&+T[u'']+\int_0^{\int_0^x p(s)\, ds} G_p''(x,s)\, T[u''](s)\, ds.
\end{align*}
Next, using \eqref{3.7}  and the second equation in \eqref{3.10}, we obtain
\begin{align*}
T''[u](x)
=
-&2\frac{r'(x)}{r(x)}\Bigg(
u(0)\varphi_0'(x)+u'(0)\varphi_1'(x)
+\int_0^{x} G_p'(x,s)\,T[u''](s)\,p(s)\,ds
\Bigg)+T[u''](x).
\end{align*}
The expression in the large parentheses is precisely $T'[u]$. Therefore,
\[
T''[u](x)
=
-2\frac{r'(x)}{r(x)}T'[u](x)+T[u''](x),
\]
which is exactly the transmutation property \eqref{3.4} for the operator 
$\mathbf{L}_r$ defined in \eqref{3.3}. This completes the proof.
\end{proof}
Proposition \ref{prep:integral-representation} gives an integral representation for a standard transmutation operator $T$ associated with $\mathbf{L}_r$. We now study the convergence of a sequence of such operators. Let $q_n \subset L^1 (-a,a)$ be a sequence converging to $q \in L^1 (-a,a)$, and let $\{T_n\}$ be the corresponding sequence of standard transmutation operators. The following proposition shows that under suitable convergence assumptions, the limit operator $T$ is again a standard transmutation operator associated with $\mathbf{L}_r$.

\begin{proposition}\label{prop:convergence-uniformly}
Let $q_n\in L^1(-a, a)$, define $r_n$ from $q_n$ as in \eqref{3.2}, and let $\{T_n\}$ be a sequence of standard transmutation operators corresponding to the operator $\mathbf{L}_{r_n}$ defined in \eqref{3.3}. If $q_n\rightarrow q$ in $L^1(-a,a)$ and
\begin{equation*}
    T_n[x^k]\rightarrow T[x^k] \ \text{uniformly on }[-a,a] \ \text{ as }n\rightarrow \infty \ \text{for all} \ k\geq 0,
\end{equation*}
then $T$ is a standard transmutation operator on $W^{3,1}(-a,a)$ corresponding to $\mathbf{L}_r$. 
\end{proposition}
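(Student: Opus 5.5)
We follow the route of the analogous limiting result in \cite{Campos}. The idea is to show that the limit functions $\varphi_k:=T[x^k]$ form a standard $\mathbf{L}_r$-base. Once that is done, Proposition \ref{prep:integral-representation} applies: $T$ has the representation \eqref{3.12} and is a standard transmutation operator for $\mathbf{L}_r$ on $W^{3,1}(-a,a)$. We interpret $T$ as the bounded extension determined by its values on monomials. Boundedness is inherited from a uniform bound on the $T_n$, which we would add as a harmless assumption if it is not automatic.

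\textbf{Step 1 (coefficients).} Set $\varphi_k^{(n)}:=T_n[x^k]$. Since $q_n\to q$ in $L^1$, the primitives $\int_0^x q_n$ converge uniformly on $[-a,a]$. Hence $r_n\to r$ and $r_n^{-1}\to r^{-1}$ uniformly, and $p_n=r_n^2\to p=r^2$ uniformly. Moreover $p_n'=-q_np_n\to p'$ in $L^1$, so $r_n\to r$ in $W^{1,1}$.

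\textbf{Step 2 (the relations pass to the limit).} Each $\varphi_k^{(n)}$ satisfies, in the weak sense,
\[
(p_n\varphi_k^{(n)\prime})'=k(k-1)p_n\varphi_{k-2}^{(n)},\qquad \varphi_k^{(n)}(0)=\varphi_k^{(n)\prime}(0)=0\ \ (k\ge2).
\]
Integrating twice gives the Volterra form
\[
\varphi_k^{(n)}(x)=k(k-1)\int_0^x\frac{1}{p_n(t)}\int_0^t p_n(s)\varphi_{k-2}^{(n)}(s)\,ds\,dt .
\]
Since $\varphi^{(n)}_{k-2}\to\varphi_{k-2}$ and $p_n^{\pm1}\to p^{\pm1}$ uniformly, the right-hand side converges uniformly. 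The left-hand side converges uniformly to $\varphi_k$ by hypothesis. Therefore $\varphi_k$ satisfies the same identity with $p$. This means $\varphi_k\in W^{2,1}$ solves \eqref{3.6} weakly with zero Cauchy data at $0$.

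For $k=0,1$, the relation $p_n\varphi_i^{(n)\prime}=c_{i,n}$ (a constant) gives
\[
\varphi_i^{(n)}(x)=\varphi_i^{(n)}(0)+c_{i,n}\int_0^x p_n^{-1}.
\]
Uniform convergence of $\varphi_i^{(n)}$ forces $\varphi_i^{(n)}(0)$ and $c_{i,n}$ to converge, since $\int_0^x p_n^{-1}$ converges uniformly to a nonconstant function. Hence $\varphi_i$ has the same form with $p$, so $\mathbf{L}_r\varphi_i=0$ weakly.

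\textbf{Step 3 (linear independence).} We need $\varphi_0,\varphi_1$ linearly independent for \eqref{3.8} to make sense. For this we track the $p$-Wronskian. Because every $\varphi_k^{(n)}$ is the image of $x^k$ under a transmutation operator of the form \eqref{trans-op}, the limits inherit $\varphi_0(0)=1$, $\varphi_1(0)=0$, $\varphi_1'(0)=1$. More generally, the Wronskian is the limit of $W_{p_n}$, and we expect it to stay nonzero. This is the main obstacle. If it cannot be read off directly, we would first try to use the normalisation $\varphi_k^{(n)}=T_n[x^k]$ at $x=0$ to pin down the limit data.

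\textbf{Step 4 (conclusion).} With $\{\varphi_k\}$ a standard $\mathbf{L}_r$-base and $T$ bounded, apply Proposition \ref{prep:integral-representation}(i)–(ii). This yields the representation \eqref{3.12} and the transmutation property \eqref{3.4} on $W^{3,1}(-a,a)$, so $T$ is a standard transmutation operator for $\mathbf{L}_r$.
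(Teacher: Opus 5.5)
Your proof is correct and has the same architecture as the paper's. Both reduce, via Proposition \ref{prep:integral-representation}, to showing that $\varphi_k=T[x^k]$ form a standard $\mathbf{L}_r$-base, and both get this by passing to the limit in integral identities. The paper also tacitly treats $T$ as a bounded operator when it invokes that proposition, so your explicit boundedness assumption is no weaker than what the paper uses.

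The one thing to change is Step 3. It is not an obstacle and should simply be deleted. Neither the definition of a standard $\mathbf{L}_r$-base nor Propositions \ref{prop:recursive} and \ref{prep:integral-representation} needs the base elements $\varphi_0,\varphi_1$ themselves to be independent:
\begin{itemize}
\item The kernel \eqref{3.8} is unchanged when the pair is replaced by any other fundamental system of $\mathbf{L}_r u=0$, because the numerator and $W_p$ pick up the same determinant.
\item The proof of Proposition \ref{prep:integral-representation}(ii) uses only $\mathbf{L}_r\varphi_i=0$, $G_p(x,x)=0$, $\partial_xG_p(x,x)=1/p(x)$ and $\mathbf{L}_rG_p(\cdot,s)=0$.
\item With the system $\{1,\int_0^x p^{-1}\}$ one has $W_p=1$ and $G_p(x,s)=\int_s^x dt/p(t)$. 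Fubini then turns $k(k-1)\int_0^x G_p(x,s)\varphi_{k-2}(s)p(s)\,ds$ into exactly your double integral, so Step 2 already finishes the job.
\end{itemize}
Independence of the limits could not be proved from the hypotheses anyway. They do not say the $T_n$ have the form \eqref{trans-op}, so the normalisations $\varphi_0(0)=1$, $\varphi_1'(0)=1$ are not inherited. Replacing $T_n$ by $\tfrac1n T_n$ shows that $W_p(\varphi_0,\varphi_1)$ can vanish in the limit.

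Within that common architecture, your Step 2 is the cleaner implementation.
\begin{itemize}
\item For $k=0,1$, the paper passes to the limit in the auxiliary functions $s_n$, which contain $\varphi_{0,n}'$. Uniform convergence of $\varphi_{0,n}$ does not by itself control derivatives. Your first integral $p_n\varphi_i^{(n)\prime}=c_{i,n}$, with the constants recovered from uniform convergence alone, closes that step rigorously.
\item For $k\ge2$, the paper takes limits in \eqref{3.9} using $G_{p_n}\to G_p$. That is legitimate precisely because $G_p$ depends only on $p$, the fact noted above. Your weighted Volterra formula makes this explicit. It is also the form in which \eqref{3.9} actually holds: measure $p(s)\,ds$ and upper limit $x$, rather than the upper limit $\int_0^x p$ printed there.
\end{itemize}
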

\begin{proof}
Let $\varphi_{k,n}(x)=T_n[x^k]$ and $\varphi_k=T[x^k]$. By Proposition \ref{prep:integral-representation}, it is enough to show that $\{\varphi_k\}$ forms a standard $\mathbf{L}_r$-base. 

Define
\begin{align*}
    s_n(x)&:=\varphi_{0,n}(x)+ \int_0^x (x-t) \frac{2 r_n'(t)}{r_n(t)}\varphi_{0,n}'(t)\, dt,\\
    s(x)&:=\varphi_{0}(x)+ \int_0^x (x-t) \frac{2 r'(t)}{r(t)}\varphi_{0}'(t)\, dt.
\end{align*}

Notice that $s_n''(x)=0$ if and only if $\mathbf{L}_{r_n}[\varphi_{0,n}]=0$. Therefore, $s_n(x)=c_1+c_2x$. Since $\{\varphi_{k,n}\}$ is a standard $\mathbf{L}_r$-base, we have 
\[
s_n(x)=\varphi_{0,n}(0)+\varphi_{0,n}'(0)x.
\]

By assumption, $\varphi_{0,n}=T_n[1]\rightarrow T[1]=\varphi_0$ uniformly on $[-a,a]$ as $n\rightarrow \infty$, and therefore 
\[
s(x)=\varphi_0(0)+\varphi_0'(0)x.
\]
Thus \( \mathbf{L}_r[\varphi_0]=0 \). Similarly, we obtain \( \mathbf{L}_r[\varphi_1]=0 \).

Let $G_{p_n}$ be the kernel corresponding to \( p_n=r_n^2 \), and let \( G_p \) be the kernel corresponding to \( p=r^2 \) for all $n$. Since \( q_n\to q \) in \( L^1(-a,a) \), it follows that \( p_n\to p \) in \( W^{1,1}(-a,a) \), with \( q_n=-p_n'/p_n \) and \( q=-p'/p \). Consequently, \( G_{p_n}(x,s)\to G_p(x,s) \) uniformly on $\mathcal{R}_a$.
To show that \( \{\varphi_k\} \) satisfies \eqref{3.6}, it is enough, by Proposition \ref{prop:recursive}, to verify \eqref{3.9}. Taking the limit in \eqref{3.9}, we obtain
\begin{align*}
\varphi_k(x)
&=\lim_{n\to\infty}\varphi_{k,n}(x)\\
&=\lim_{n\to\infty}k(k-1)\int_0^{\int_0^x p_n(s)\, ds} G_{p_n}(x,s)\,\varphi_{k-2,n}(s)\,p_n(s)\,ds\\
&=k(k-1)\int_0^{\int_0^x p(s)\, ds} G_p(x,s)\,\varphi_{k-2}(s)\,p(s)\,ds, \qquad k\ge 2.
\end{align*}
Hence \( \{\varphi_k\} \) is a standard \( \mathbf{L}_r \)-base.
\end{proof}
Now, using a limit argument, we generalize the existence result in \cite[Th.\ 4, Prop.\ 5]{Benitez} to include integrable potentials. In those results, the existence of a standard transmutation operator was shown for specific classes of potentials. 

By considering a sequence of continuous potentials $q_n$ converging to $q \in L^1(-a,a)$, we extend these result to the more general case. This approach not only preserves the key properties of the operator but also broadens the class of potentials for which the result holds, as shown in the following theorem.
\begin{theorem}
Under the assumptions of Proposition \ref{prop. 2.1}, there exists a unique weak solution $K \in W^{1,1}(\mathcal{R}_a)$ of \eqref{2.4} satisfying \eqref{2.6}. Moreover, the operator
\[
    \mathbf{T}u(x)=u(x)+\int_{-x}^x K(x,t)u'(t)\, dt,
\]
is a standard transmutation operator on $W^{3,1}(-a,a)$ corresponding to $\mathbf{L}_r$.
\end{theorem}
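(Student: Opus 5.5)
The plan is a limit argument built on Propositions \ref{prop. 2.1}, \ref{prop:continuity} and \ref{prop:convergence-uniformly}, starting from the classical result of \cite[Th.\ 11]{Benitez} for smooth impedances.

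\textbf{Step 1: the kernel.} Proposition \ref{prop. 2.1} gives a unique $H\in W^{1,1}(\Omega_a)\cap C(\Omega_a)$ solving \eqref{2.9}. Set $K(x,t)=H(\frac{x+t}{2},\frac{x-t}{2})$. The change of variables $(u,v)\mapsto(x,t)$ is linear and invertible, so $K\in W^{1,1}(\mathcal{R}_a)\cap C(\overline{\mathcal{R}_a})$. Setting $v=0$ and $u=0$ in \eqref{2.9} recovers the Goursat data \eqref{2.6}, using $B=1+A$.

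To see that $K$ is a weak solution of \eqref{2.4}, I would proceed as follows:
\begin{itemize}
\item Differentiate \eqref{2.9} in the sense of distributions to get $\partial_u\partial_v H=\frac12 q(u+v)\mathcal{D}H$, which is an $L^1$ identity by \eqref{DH_global_final}.
\item Test this identity against $\phi\in C_c^\infty$.
\item Undo the change of variables to obtain the weak form of \eqref{2.4}.
\end{itemize}

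Uniqueness in $W^{1,1}$ follows because any weak solution with data \eqref{2.6} can be integrated back into \eqref{2.9}. This uses continuity of the $W^{1,1}$ trace on characteristics, together with the uniqueness part of Proposition \ref{prop. 2.1}.

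\textbf{Step 2: approximation.} Choose continuous $q_n\to q$ in $L^1(-a,a)$ that still satisfy the smallness hypotheses. One option is to mollify and rescale by a factor $\theta_n\uparrow 1$; the $L^2$ condition is preserved because mollification does not increase $L^2$ norms. For each $n$, $r_n\in C^1$, so \cite[Th.\ 11]{Benitez} (with its $A,B$ adapted) shows that
\[
\mathbf{T}_n u(x)=u(x)+\int_{-x}^x K_n(x,t)u'(t)\,dt
\]
is a transmutation operator for $\mathbf{L}_{r_n}$. Its images of monomials form a standard $\mathbf{L}_{r_n}$-base: $\varphi_{k,n}(0)=0$ and $\varphi_{k,n}'(0)=0$ for $k\ge2$ follow directly, since the integral over $[-x,x]$ and its derivative at $x=0$ vanish for $u=x^k$ with $k\ge2$.

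Proposition \ref{prop:continuity} gives $H_n\to H$ in $C(\Omega_a)$, hence $K_n\to K$ uniformly on $\mathcal{R}_a$. Then
\[
\sup_{|x|\le a}\bigl|\mathbf{T}_n[x^k]-\mathbf{T}[x^k]\bigr|\le 2a\,k\,a^{k-1}\|K_n-K\|_\infty\to0,
\]
so Proposition \ref{prop:convergence-uniformly} shows that the $\varphi_k=\mathbf{T}[x^k]$ form a standard $\mathbf{L}_r$-base.

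\textbf{Step 3: conclusion.} Since $K$ is bounded, $\mathbf{T}\colon W^{1,1}(-a,a)\to L^1(-a,a)$ is bounded, with
\[
\|\mathbf{T}u\|_{L^1}\le\|u\|_{L^1}+2a^2\|K\|_\infty\|u'\|_{L^1}.
\]
Proposition \ref{prep:integral-representation}(ii) then makes $\mathbf{T}$ a standard transmutation operator on $W^{3,1}(-a,a)$.

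\textbf{Main obstacle.} The delicate point is Step 2. I must make sure the approximating operators from \cite{Benitez} are really standard transmutation operators in the sense of the Definition, with a consistent choice of $A$ and sign convention for $K$. I must also check that the continuity in Proposition \ref{prop:continuity} applies along the approximating sequence, i.e. that the constant $C^*$ stays bounded while the $q_n$ remain uniformly inside the smallness region. I would handle both by fixing $A$ once and for all and taking $\theta_n$ bounded away from the boundary of the admissible set.
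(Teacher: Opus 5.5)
Your overall architecture matches the paper's: smooth approximants $q_n$, uniform convergence $K_n\to K$ from Proposition~\ref{prop:continuity}, then Proposition~\ref{prop:convergence-uniformly}. Your Step~1, however, is a genuine improvement.

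\textbf{Comparison with the paper.} The paper gets the weak form of \eqref{2.4} by passing to the limit in \eqref{3.14}. That step needs $L^1$ convergence of $\partial_xK_n,\partial_tK_n$, which Proposition~\ref{prop:continuity} does not provide. Moreover, \eqref{3.14} has the wrong lower-order term: $q_nK_n\,\partial_x\varphi$ instead of $q_n(\partial_xK_n)\varphi$. Reading $\partial_u\partial_vH=\tfrac12 q(u+v)\mathcal{D}H$ directly off \eqref{2.9}, with the right-hand side in $L^1(\Omega_a)$ by \eqref{DH_global_final}, avoids both problems. As a result your limit argument only uses $C^0$ convergence of the kernels. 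Further points:
\begin{itemize}
\item Your reduction of an arbitrary weak solution back to \eqref{2.9} supplies the uniqueness that the paper only asserts. Make explicit that weak solutions are required to have $q(x)\,\partial_xK\in L^1_{\mathrm{loc}}$, since for $K\in W^{1,1}$ alone that product need not be defined.
\item Your mollified approximants satisfy the smallness hypotheses, so Proposition~\ref{prop:continuity} really applies along the sequence. Arbitrary $C^1$ approximants, as used in the paper, need not satisfy them.
\item The factor $\theta_n$ is unnecessary. Mollifying the zero extension already gives $\|q_n\|_{L^p(-a,a)}\le\|q\|_{L^p(-a,a)}$ for $p=1,2$, so the $q_n$ stay uniformly inside the admissible region.
\item The constant in Step~3 should be $2a$, not $2a^2$.
\end{itemize}

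\textbf{The gap: the sign of the operator.} The point you flag as the main obstacle is a real gap, and fixing $A$ does not close it. Recall $\mathbf{L}_r=\frac{d^2}{dx^2}-q\frac{d}{dx}$. Take a $C^2$ kernel solving \eqref{2.4} and $\mathbf{T}u=u+\int_{-x}^xK(x,t)u'(t)\,dt$. Differentiating twice in $x$ and integrating $\int_{-x}^xKu'''\,dt$ by parts twice gives
\[
\mathbf{L}_r\mathbf{T}u-\mathbf{T}[u'']=\bigl(2k_1'-q(1+k_1)\bigr)u'(x)+\bigl(2k_2'-qk_2\bigr)u'(-x),\qquad k_1(x)=K(x,x),\ \ k_2(x)=K(x,-x).
\]
Under \eqref{2.6} the second bracket vanishes, but the first equals $-2q$; for instance $\mathbf{L}_r\mathbf{T}[x]=-2q\neq0$. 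So your $\mathbf{T}_n$, written with the plus sign as in the statement, are not transmutation operators unless $q_n\equiv0$. Hence Proposition~\ref{prop:convergence-uniformly} cannot be applied to them.

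With $\mathbf{T}u=u-\int_{-x}^xK(x,t)u'(t)\,dt$, both brackets vanish for every $A$. The relation $B=1+A$ is needed only for consistency at the corner. This is the convention of \cite[Th.\ 11]{Benitez} recalled in Section~\ref{section-3}, there with $A=-1$. The paper's proof asserts without verification that the plus-sign $T_n$ are standard transmutation operators. So this is a sign error in the statement, not a flaw peculiar to your route.

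To repair it, run your argument with the minus sign (equivalently, replace $K$ by $-K$). Verify the transmutation property of each $\mathbf{T}_n$ by the computation above rather than by citing \cite{Benitez} for general $A$. Your Steps~2 and~3 then go through unchanged.
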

\begin{proof}
By Proposition \ref{prop. 2.1}, there exists a unique $H\in W^{1,1}(\Omega_a)$ satisfying \eqref{2.9}. 
Let $\{q_n\}\subset C^1(-a,a)$ be a sequence such that $q_n\rightarrow q$ in $L^1(-a,a)$. Since $q_n\in C^1(-a,a)$, there exists a sequence $\{K_n\}\subset C^2(\mathcal{R}_a)$ such that $K_n$ satisfies the following equation (see \cite[Prop.\ 5]{Benitez})
\begin{equation*}
    \dfrac{\partial^2 K_n(x,t)}{\partial^2 x}-\dfrac{\partial^2 K_n(x,t)}{\partial^2 t}-q_n(x)\dfrac{\partial K_n(x,t)}{\partial x}=0, \quad |t|<x<a.
\end{equation*}
In particular, for  all  $\varphi\in C_0^{\infty}(\mathcal{R}_a)$, $K_n$ satisfies
\begin{equation}\label{3.14} 
\int\int_{\mathcal{R}_a} \left((\partial_t K_n)\partial_t\varphi-(\partial_x K_n)\partial_x \varphi-q_n K_n \partial_x \varphi\right)\, dx dt=0.
\end{equation}
As before, let $\{H_n\}$ with $K_n(x,t)=H_n(\frac{x+t}{2},\frac{x-t}{2})$ be the solutions of the integral equation
\begin{align} \label{3.15}
H_n(u,v)=& A \exp \Bigl\{\dfrac{1}{2} \int^u_0 q_n(s)ds \Bigl\} +B \exp \Bigl\{\dfrac{1}{2} \int^v_0 q_n(s)ds\Bigl\}-A \\ \nonumber
    &+\dfrac{1}{2}\int^u_0 \int^v_0 q_n(\alpha+\beta) \bigg(\dfrac{\partial H_n (\alpha, \beta)}{\partial \alpha}+\dfrac{\partial H_n (\alpha, \beta)}{\partial \beta}\bigg)\,d \beta d \alpha.
\end{align}
By Proposition \ref{prop:continuity}, we have $H_n\rightarrow H$ uniformly on $\Omega_a$. Differentiating \eqref{3.15}, we obtain that $\partial_u H_n\rightarrow \partial_u H$ and $\partial_v H_n\rightarrow \partial_v H$ in $L^1(\Omega_a)$ as $n\rightarrow \infty$. Consequently, $\partial_x K_n\rightarrow \partial_x K$ and $\partial_t K_n\rightarrow \partial_t K$ in $L^1(\mathcal{R}_a)$. Finally, taking the limit in \eqref{3.14}, we conclude that $K$ is a weak solution of \eqref{2.4}. 

For the Goursat boundary conditions, we have
\begin{align*}
K_n(x,x)&=1+Ae^{\frac{1}{2}\int_0^x q_n(s)\, ds}\to 1+A e^{\frac{1}{2}\int_0^x q(s)\, ds}=K(x,x),\\
K_n(x,-x)&=Be^{\frac{1}{2}\int_0^x q_n(s)\, ds}\to Be^{\frac{1}{2}\int_0^x q(s)\, ds}=K(x,-x),
\end{align*}
as \( n\to\infty \). By Proposition \ref{prop:convergence-uniformly}, $\mathbf{T}$ is a standard transmutation operator because it is the uniform limit of the standard transmutation operators 
\[
T_nu(x)=u(x)+\int_{-x}^x K_n(x,t)\, u'(t)\, dt.
\] 
\end{proof}
The results above extend the theory of standard transmutation operators to integrable potentials.  In particular, we can relate any two standard transmutation operators on the space of polynomials, and we fill remaining gaps in the theory for general second-order linear differential operators.
\begin{remark}
Let $\mathcal{P}(\mathbb{R})$ denote the linear space of polynomials, and consider the operators
\[
P_{\pm}u(x)=\frac{u(x)\pm u(-x)}{2} \quad \text{and} \quad Au(x)=\int_0^x u(s)\, ds.
\]
Recall that the construction of standard transmutation operators for the Sturm--Liouville equation in impedance form is determined by their first two images, namely
$\varphi_0=T[1]$ and $\varphi_1=T[x]$. 
Thus, the explicit formula relating any two standard transmutation operators given in \cite[Prop.\ 18]{Campos} remains valid when restricted to $\mathcal{P}(\mathbb{R})$. That is, if $W(\psi_0,\psi_1)\ne 0$, then
\begin{align*}
T_{\varphi}=&\dfrac{W(\varphi_0,\psi_1)}{W(\psi_0,\psi_1)}T_{\psi}P_+-\dfrac{W(\varphi_0,\psi_0)}{W(\psi_0,\psi_1)}T_{\psi}AP_++\dfrac{W(\varphi_1,\psi_1)}{W(\psi_0,\psi_1)}T_{\psi}\dfrac{d}{dx}P_-\\
&-\dfrac{W(\varphi_1,\psi_0)}{W(\psi_0,\psi_1)}T_{\psi}P_-
\end{align*}
where $\{\varphi_k\}$ and $\{\psi_k\}$ are standard $\mathbf{L}_r$-base and $T_{\varphi}$ and $T_{\psi}$ are their corresponding standard transmutation operators. 
\end{remark}
\begin{remark}
Combining the results of \cite{Campos} with those presented in Section \ref{section-3}, we extend the theory of standard transmutation operators to more general second-order linear differential operators of the form
\[
a_2(x)\frac{d^2}{dx^2}+a_1(x) \frac{d}{dx}+a_3(x)
\] 
thereby addressing the gaps identified in the conclusions of \cite{Campos}.
\end{remark}
\section{Evolutionary equations}\label{section-4}

Evolutionary equations typically refer to partial differential equations that model the time evolution of a physical system. Such problems are expressed in the form
\begin{equation} \label{4.1} 
(\partial_t M(\partial_t) + A)U = F,
\end{equation}
where $U$ is the unknown solution, $A$ is an unbounded skew-self-adjoint operator, and $M$ is an analytic, bounded material law operator. For any $\nu\in \mathbb{R}$, we define 
\[
\mathbb{C}_{Re>\nu}:=\{z\in \mathbb{C}\colon \text{Re }z>\nu\}.
\]

Let $\mathcal{H}$ be a Hilbert space and $\nu\in \mathbb{R}$. Following \cite{Seifert}, we define the Hilbert space for the time derivative, denoted by $L^2_{\nu}(\mathbb{R},\mathcal{H})$,as the set of all Bochner measurable functions $f$ such that
\[
\int_{\mathbb{R}} \|f(t)\|_{\mathcal{H}}^2 e^{-2\nu t}\, dt<\infty.
\]
That is, $f \in L^2_{\nu}(\mathbb{R},\mathcal{H})$ if and only if the above integral is finite. Moreover, the space
$B(L^2_\nu)$ is defined as the set of continuous linear operators from $L^2_{\nu}(\mathbb{R},\mathcal{H})$ to itself. We will explain briefly how to define the time derivative on  $L^2_{\nu}(\mathbb{R},\mathcal{H})$. 

First, following \cite[Section 3.2]{Seifert}, let us consider the operator
\[
I_{\nu}\colon  L^2_{\nu}(\mathbb{R},\mathcal{H}) \to  L^2_{\nu}(\mathbb{R},\mathcal{H})
\] 
defined by
\begin{equation*}
 I_{\nu} f(t)=\begin{cases}
 \int_{-\infty}^t f(s)\, ds, \quad \nu>0,\\
 -\int_t^{\infty} f(s)\, ds, \quad \nu<0,
 \end{cases}
\end{equation*}
which is a bounded and injective operator. Thus, the \textit{time derivative}, denoted by $\partial_{t,\nu}$ is defined on $L^2_{\nu}(\mathbb{R},\mathcal{H})$ by $\partial_{t,\nu} f=I_{\nu}^{-1}f$, for any $\nu\ne 0$.
\begin{definition}
\cite{Seifert}
     Let $B(\mathcal{H})$ be the set of continuous linear operators on $\mathcal{H}$. A mapping $M: dom(M) \subset \mathbb{C} \rightarrow B(\mathcal{H})$ is called a material law if
    \begin{itemize}
        \item[(i)]  $dom(M)$ is open and $M$ is holomorphic,
        \item[(ii)]  there exists some $\nu \in \mathbb{R}$ such that $\mathbb{C}_{Re> \nu} \subset dom (M)$ and
        \begin{equation}
            \nonumber
            ||M||_{\infty, \mathbb{C}_{Re>\nu}} := \sup_{z\in \mathbb{C}_{Re>\nu}} ||M(z)|| < \infty.
        \end{equation}
    \end{itemize}
    Moreover, we define \textit{the abscissa of boundedness of} $M$ as
    \begin{equation}
        \nonumber
        s_b (M) := \inf \{ \nu \in \mathbb{R}; \ \mathbb{C}_{Re> \nu} \subset dom(M) \ and \  ||M||_{\infty, \mathbb{C}_{Re>\nu}} < \infty  \}.
    \end{equation}
    \end{definition}
Well-known examples of evolutionary equations include the heat equation, the wave equation, and the Schrödinger equation. For these equations, Picard's theorem (see \cite{Seifert}) has been applied to establish the existence of solutions. A very recent application of Picard's theorem is the proof of the existence of weak solutions for the angular-type Schrödinger equation \cite{macias2024}. 

The following theorem states Picard's theorem in the context of evolutionary equations, providing the fundamental well-posedness result for this class of problems.
\begin{theorem}\label{th:Picard}
\cite{Seifert}
    Let $\nu_0 \in \mathbb{R}$ and $\mathcal{H}$ be a Hilbert space. Suppose that $M:$ dom$(M) \subset \mathbb{C} \rightarrow B(\mathcal{H})$ is a material law with $s_b (M) < \nu_0$ and that $A:$ dom$(A) \subset \mathcal{H} \rightarrow \mathcal{H}$ is a skew-self-adjoint operator. Assume further that there exists a constant $C>0$ such that
    \begin{equation}\label{4.2}
        Re \langle \phi, zM(z)\phi \rangle_\mathcal{H} \geq C||\phi ||_\mathcal{H}^2 \ \ \ \ \ (\phi \in \mathcal{H}, z \in \mathbb{C}_{Re\geq \nu_0}).
    \end{equation}
     Then, for all $\nu \geq \nu_0$ the operator $\partial_{t, \nu} M(\partial_{t, \nu}) + A$ is closable and its closure has a bounded inverse
\begin{equation*}
    S_\nu := \overline{(\partial_{t, \nu}\,M(\partial_{t,\nu}) + A)}^{-1} \in \mathcal{B}(L_{2,\nu}(\mathbb{R}; \mathcal{H})).
\end{equation*}
Moreover, $S_\nu$ is causal and satisfies $||S_\nu||_{L(L_{2,\nu})} \leq 1/C$, and for every $F \in dom(\partial_{t, \nu})$ we have
\begin{equation*}
    S_\nu F \in dom(\partial_{t, \nu}) \cap dom(A).
\end{equation*}
Finally, if $\eta, \nu \geq \nu_0$ and $F \in L_{2, \nu}(\mathbb{R}; \mathcal{H}) \cap L_{2, \eta}(\mathbb{R}; \mathcal{H})$ then $S_\nu F = S_\eta F$.
\end{theorem}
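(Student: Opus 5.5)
We would prove Theorem \ref{th:Picard} in the standard way from \cite{Seifert}: use the Fourier--Laplace transform to turn the evolutionary problem into a family of operator equations parametrized by $z$, solve these uniformly, and transform back.

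\textbf{Step 1: unitary equivalence.} The Fourier--Laplace transform $\mathcal{L}_\nu\colon L^2_\nu(\mathbb{R},\mathcal{H})\to L^2(\mathbb{R},\mathcal{H})$, given by $(\mathcal{L}_\nu f)(\xi)=\frac{1}{\sqrt{2\pi}}\int_{\mathbb{R}} e^{-(i\xi+\nu)t}f(t)\,dt$, is unitary. It carries $\partial_{t,\nu}$ to multiplication by $i\xi+\nu$. By the holomorphic functional calculus it carries $M(\partial_{t,\nu})$ to the bounded multiplication operator $M(i\xi+\nu)$; this operator is bounded because $\nu>s_b(M)$.

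\textbf{Step 2: pointwise solution.} For fixed $z=i\xi+\nu$ with $\operatorname{Re}z\ge\nu_0$, we show that $zM(z)+A$ is boundedly invertible on $\mathcal{H}$ with norm at most $1/C$.
\begin{itemize}
\item Since $A$ is skew-self-adjoint, $\operatorname{Re}\langle\phi,A\phi\rangle=0$. Together with \eqref{4.2} this gives $\operatorname{Re}\langle\phi,(zM(z)+A)\phi\rangle\ge C\|\phi\|^2$ for $\phi\in\operatorname{dom}(A)$.
\item Hence $zM(z)+A$ is injective with closed range, and $\|(zM(z)+A)^{-1}\|\le 1/C$ on that range.
\item The adjoint is $(zM(z))^*-A$. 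Condition \eqref{4.2} passes to it, so the adjoint is also injective and the range is dense.
\end{itemize}
Therefore $zM(z)+A$ is bijective. Multiplying pointwise in $\xi$ by $(zM(z)+A)^{-1}$ and transforming back defines an operator $S_\nu$ with $\|S_\nu\|\le 1/C$.

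\textbf{Step 3: closure and regularity.} Next we identify $S_\nu$ with the inverse of the closure of $\partial_{t,\nu}M(\partial_{t,\nu})+A$.
\begin{itemize}
\item On the transformed side, the sum operator acts by multiplication on a core where both terms make sense. Its closure is the maximal multiplication operator, which is exactly the inverse of $S_\nu$.
\item If $F\in\operatorname{dom}(\partial_{t,\nu})$, then $(i\xi+\nu)\hat F\in L^2$. Since $S_\nu$ commutes with $\partial_{t,\nu}$ (both are multiplication operators), $S_\nu F\in\operatorname{dom}(\partial_{t,\nu})$.
\item In that case $AS_\nu F=F-\partial_{t,\nu}M(\partial_{t,\nu})S_\nu F$ lies in $L^2_\nu$, and $A$ is closed, so $S_\nu F\in\operatorname{dom}(A)$.
\end{itemize}

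\textbf{Step 4: causality and independence of $\nu$.} Both follow from a Paley--Wiener argument. The function $z\mapsto (zM(z)+A)^{-1}$ is holomorphic and bounded by $1/C$ on $\mathbb{C}_{\operatorname{Re}>\nu_0}$. Its holomorphy comes from the resolvent-type identity combined with the holomorphy of $M$.
\begin{itemize}
\item For causality, take $F$ supported in $[a,\infty)$. Then $e^{za}\hat F(z)$ extends holomorphically and boundedly to the half-plane. Multiplying it by a bounded holomorphic operator function keeps it in the Hardy space, so by Paley--Wiener $S_\nu F$ is also supported in $[a,\infty)$.
\item For independence of $\nu$, use the same holomorphy together with Cauchy's theorem to shift the integration line from $\operatorname{Re}z=\nu$ to $\operatorname{Re}z=\eta$. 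This gives $S_\nu F=S_\eta F$ for $F$ in both spaces.
\end{itemize}

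The main obstacle is Step 3: identifying the closure rigorously requires checking that $\operatorname{dom}(\partial_{t,\nu})\cap\operatorname{dom}(A)$ is a core. We would first try to establish this by truncating $\hat F$ in $\xi$ and using the density of $\operatorname{dom}(A)$.
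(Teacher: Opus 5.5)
The paper does not prove this theorem itself; it quotes it from the monograph of Seifert, Trostorff and Waurick. Your outline is essentially the standard proof given there: Fourier--Laplace transform, invertibility of $zM(z)+A$ with norm at most $1/C$ from coercivity of the operator and of its adjoint, holomorphy via the resolvent identity, causality and $\nu$-independence from that holomorphy, and finally identification of the closure.

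The point you flag as the main obstacle is already settled by your truncation idea. For $\hat u$ in the domain of the maximal multiplication operator, $\mathbf{1}_{[-n,n]}\hat u$ lies in $\operatorname{dom}(\partial_{t,\nu})\cap\operatorname{dom}(A)$ because $zM(z)$ is bounded for $|\xi|\le n$. Its image $\mathbf{1}_{[-n,n]}(zM(z)+A)\hat u$ converges to $(zM(z)+A)\hat u$, so you do not even need density of $\operatorname{dom}(A)$.
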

We now show that the hyperbolic equation satisfied by the transmutation kernel \eqref{2.4} can be formulated as an evolutionary equation. To achieve this, we begin by considering the case of an absolutely integrable potential. Recall that $q \in L^1(-a,a)$, guarantees that both $r$ and $r^{-1}$ are bounded. Afterward, we apply Picard’s theorem to obtain an existence result.

\begin{theorem}\label{th:existence}
Let $q\in L^{1}(-a,a)$ be a real-valued potential. Then the hyperbolic equation \eqref{2.4} admits a solution $K$ such that $\partial_x K, \ \partial_t K \in L^2(\mathcal{R}_a)$.
\end{theorem}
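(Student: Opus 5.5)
We plan to recast \eqref{2.4} as a first-order evolutionary system in which $x$ plays the role of time, and then apply Theorem \ref{th:Picard}. The key observation is that \eqref{2.4} has divergence form:
\[
\partial_x^2K-q\,\partial_xK=\frac{1}{r^{-2}}\,\partial_x\!\left(r^{-2}\partial_xK\right)\cdot r^{-2}\cdot r^{2},
\]
so, using $q=-2r'/r$, equation \eqref{2.4} is equivalent to
\[
r^{2}(x)\,\partial_x\!\left(r^{-2}(x)\,\partial_xK\right)-\partial_t^2K=0 .
\]
This holds because $\partial_x(r^{-2}K_x)=r^{-2}K_{xx}-2r^{-3}r'K_x=r^{-2}(K_{xx}+qK_x)$. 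The sign of $q$ should be rechecked against \eqref{2.2}; if needed we replace $r$ by $r^{-1}$, which changes nothing below because $r$ and $r^{-1}$ are both bounded.

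Set $U=(v,w)$ with $v=r^{-1}\partial_xK$ and $w=r^{-1}\partial_tK$, or a similarly weighted pair. The aim is a system of the form
\[
\partial_x\bigl(M_0(x)U\bigr)+\begin{pmatrix}0&\partial_t\\ \partial_t&0\end{pmatrix}U=F ,
\]
where $M_0$ is a multiplication operator built from $r^{\pm2}$. Since $q\in L^1$, the bounds $e^{-\|q\|_{L^1}/2}\le r\le e^{\|q\|_{L^1}/2}$ show that $M_0$ is bounded, self-adjoint, and uniformly positive definite. Because $q\in L^1$ only, we do not want $x$-derivatives of $r$ to appear as coefficients. We therefore plan to treat the $x$-dependent weight by choosing the unknowns so that the weight sits inside the derivative as $\partial_x(r^{-2}\cdot)$. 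We then pass to the new variable $\xi=\int_0^x r^{2}(s)\,ds$, which gives $r^{-2}\partial_x=\partial_\xi$. In the $\xi$ variable the system becomes $\partial_\xi \tilde M(\xi)U+AU=F$ with $\tilde M(\xi)=\mathrm{diag}(1,r^{4})$ or similar: a bounded, time-dependent multiplication operator with no derivatives of $r$.

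The Hilbert space is $\mathcal H=L^2(-a,a)^2$ in the variable $t$. Here $A=\begin{pmatrix}0&\partial_t\\ \partial_t&0\end{pmatrix}$ with domain chosen, for instance, as $H^1_0\times H^1$ (or periodic conditions). Integration by parts then shows that $A$ is skew-self-adjoint. The material law is $M(z)=M_0+z^{-1}M_1$. Positivity \eqref{4.2} follows from $\operatorname{Re}\langle\phi,zM_0\phi\rangle\ge \nu_0\,\min r^{\pm4}\,\|\phi\|^2$, after absorbing the $M_1$ part by taking $\nu_0$ large. The Goursat data \eqref{2.6} are incorporated in the usual way: choose a smooth-in-$\xi$ extension $K_0$ matching the boundary values, and put the resulting residual into the right-hand side $F\in L^2_\nu$. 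We extend by zero for negative times and cut off beyond $a$, noting that causality of $S_\nu$ makes the choice beyond the region irrelevant.

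Theorem \ref{th:Picard} then yields $U=S_\nu F\in L^2_\nu(\mathbb R,\mathcal H)$. On the bounded interval the weight $e^{-2\nu\xi}$ is comparable to $1$, so $\partial_xK=rv$ and $\partial_tK=rw$ lie in $L^2(\mathcal R_a)$. To recover $K$ itself, we set $K=K_0+\int_0^t w$ and check compatibility $\partial_x(\text{that})=rv$ from the first equation of the system in the distributional sense. The main obstacle we expect is the handling of the $x$-dependent coefficient with only $L^1$ regularity: Picard's theorem in its stated form needs an autonomous $M(z)$. The plan is either the change of variables above combined with the variant of Picard's theorem for time-dependent multiplicative material laws, or, if that is unavailable, freezing the coefficient through the substitution $K=r\,\tilde K$ is ruled out since it introduces $r''$. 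In that case we would approximate $q$ by $q_n\in C^1$, obtain solutions with uniform $L^2$ bounds $\|U_n\|\le\|F_n\|/C$ with $C$ depending only on $\|q\|_{L^1}$, and pass to a weak limit.
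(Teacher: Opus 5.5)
The gap is in the step that carries the whole argument: the application of Theorem \ref{th:Picard}. By letting $x$ play the role of time, you turn $r^{\pm2}(x)$ into time-dependent multiplication operators. Theorem \ref{th:Picard}, however, only covers autonomous material laws $M(\partial_{t,\nu})$, i.e.\ operators commuting with time translations. The substitution $\xi=\int_0^x r^{\mp2}(s)\,ds$ does not remove this. It absorbs one weight into $\partial_\xi$ but leaves a coefficient $r^{\pm4}$ depending on the new time $\xi$, as you note yourself. So the subsequent ``material law $M(z)=M_0+z^{-1}M_1$'' and the check of \eqref{4.2} via $\min r^{\pm4}$ treat a non-autonomous operator as if it were autonomous. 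The non-autonomous variants of Picard's theorem need $M_0$ to be Lipschitz in time, with $\dot M_0$ entering the positivity condition (roughly $\nu M_0+\tfrac12\dot M_0+\operatorname{Re}M_1\ge c>0$). Here $\frac{d}{dx}r^2=-q\,r^2$ is merely integrable, so these hypotheses fail precisely in the case $q\in L^1$ that the theorem is about. Your fallback has the same hole. For $q_n\in C^1$ the admissible $\nu_n$ grows with $\|q_n\|_{L^\infty}$, which tends to $\infty$ whenever $q\notin L^\infty$. Returning from the weighted norm to $L^2$ on $[0,a]$ then costs a factor $e^{\nu_n a}$. Hence the bound $\|U_n\|\le\|F_n\|/C$ with $C$ depending only on $\|q\|_{L^1}$ is not something Picard's theorem delivers; it would need a separate Gronwall-type energy estimate, which the proposal does not contain.

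The missing idea is to use $t$, not $x$, as the evolution variable. Since $r$ depends on $x$ only, \eqref{2.4} reads $\partial_t(r^2\partial_tK)-\partial_x(r^2\partial_xK)=0$. The correct divergence form is $r^{-2}\partial_x(r^2\partial_xK)=\partial_x^2K-q\,\partial_xK$, which settles the sign you flagged. With $U=(\partial_tK,\,r^2\partial_xK)$ the equation becomes $(\partial_tM+A)U=0$, where $M=\mathrm{diag}(r^2,r^{-2})$ and $A=-\begin{pmatrix}0&\partial_x\\ \partial_x&0\end{pmatrix}$. Now $M$ is a $z$-independent bounded multiplication operator in the spatial variable, hence a genuine autonomous material law, and no derivative of $r$ ever appears. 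Condition \eqref{4.2} follows from $\operatorname{Re}\langle\phi,zM\phi\rangle=\operatorname{Re}z\,\bigl(\|r\phi_1\|^2+\|r^{-1}\phi_2\|^2\bigr)$ together with $r^{\pm1}\in L^\infty$. This is the paper's proof, and it is exactly what makes $q\in L^1$ sufficient. Finally, your machinery for the Goursat data is not required by the statement, which imposes no side conditions. The paper takes $F=0$, for which Picard's theorem returns only $U=S_\nu0=0$, i.e.\ a constant $K$, and that already satisfies the statement as formulated. As set up, your lifting would not enforce \eqref{2.6} anyway. With $x$ as time and boundary conditions at $t=\pm a$ coming from $\mathrm{dom}(A)$, you are solving an initial--boundary value problem on a strip. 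The data in \eqref{2.6}, by contrast, sit on the characteristics $t=\pm x$, which are neither time slices nor spatial boundaries of that problem.
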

\begin{proof}
Consider the Hilbert space $\mathcal{H} = (L^2(\mathcal{R}_a))^2$ equipped with the inner product
\[
\langle f, g\rangle_{\mathcal{H}} = \sum_{i=1}^2 \int_{\mathcal{R}_a} f_i \, \overline{g_i}.
\]

Define
\[
M(z)= \begin{pmatrix}
r^2 & 0\\
0 & 1/r^2 
\end{pmatrix}, \quad 
A =  \begin{pmatrix}
0 & -\partial_{x} \\
-\partial_{x} & 0 
\end{pmatrix}, \quad
U= \begin{pmatrix}\partial_t K \\ r^2 \partial_x K \end{pmatrix}, \quad 
F= \begin{pmatrix}0 \\ 0 \end{pmatrix}.
\]
We now verify that the differential equation satisfied by the transmutation kernel \eqref{2.4} can be rewritten as an evolutionary equation \eqref{4.1}. Indeed,
\[
(\partial_tM(\partial_t)+A)U=\begin{pmatrix}
r^2\partial_t & -\partial_x\\
-\partial_x & \partial_t/r^2
\end{pmatrix}
\begin{pmatrix}
\partial_t K\\
r^2 \partial_x K
\end{pmatrix}
=
\begin{pmatrix} 
r^2(\partial_t^2K-\partial_x^2K+q\partial_xK)\\
-\partial_x\partial_t K + \partial_t\partial_x K
\end{pmatrix}
=
\begin{pmatrix}
0\\
0
\end{pmatrix}.
\]

By \cite[Ex.\ 5.3.1]{Seifert}, $M$ is a material law with $s_b(M)=-\infty$. Moreover, since $(\partial_x)^* = -\partial_x$, the operator $A$ is skew-self-adjoint.

Let $z \in \mathbb{C}_{\mathrm{Re}\, z \geq \nu_0}$ and $\phi=(\phi_1,\phi_2) \in \mathcal{H}$. Using that $r,r^{-1}\in L^{\infty}(-a,a)$, then we verify that the coercivity condition \eqref{4.2} holds
\[
\begin{aligned}
\mathrm{Re}\, \langle \phi, zM(z)\phi \rangle  
&= \mathrm{Re}\, \langle (\phi_1, \phi_2), (zr^2\phi_1, zr^{-2}\phi_2) \rangle_{\mathcal{H}} \\
&= \mathrm{Re}\left(\langle \phi_1, zr^2\phi_1 \rangle_{L^2} + \langle \phi_2, zr^{-2}\phi_2\rangle_{L^2}\right)\\
&=\mathrm{Re}\left(\langle r\phi_1, zr\phi_1 \rangle_{L^2} + \langle r^{-1}\phi_2, zr^{-1}\phi_2\rangle_{L^2}\right)\\
&=\mathrm{Re}(z)  \left(\|r\phi_1\|_{L^2}^2+\|r^{-1}\phi_2\|_{L^2}^2\right)\\
&\geq \nu_0 \min\{ \|r^{-1}\|^{-2}_{L^{\infty}}, \|r\|^{-2}_{L^{\infty}}\} \left(\|\phi_1\|_{L^2}^2+\|\phi_2\|_{L^2}^2\right),
\end{aligned}
\]
where we used that $\mathrm{Re}\, z \geq \nu_0$.
This establishes coercivity of $zM(z)$ uniformly on the whole
half-plane $\mathbb C_{\operatorname{Re}z\ge\nu_0}$, as required by
Theorem \ref{th:Picard}.

By Theorem \ref{th:Picard}, it follows that the homogeneous evolutionary equation
$(\partial_tM(\partial_t)+A)U=F$ admits a unique solution $U\in
\mathcal H$. In particular, there exists a solution $K$ of \eqref{2.4} such
that $\partial_xK,\,r^2\partial_tK\in L^2(\mathcal R_a)$. By the boundedness of $r$, we get that $\partial_xK, \partial_tK\in L^2(\mathcal R_a)$ as we desired.

\end{proof}

\section{Conclusion}
In this work, we establish the existence of a transmutation operator for the Sturm--Liouville equation in impedance form on $W^{3,1}(-a,a)$ for potentials $q\in L^1(-a,a)$. We also extend the theory of standard transmutation operators developed in \cite{Campos}. As a future task, we will attempt to remove the smallness condition from Proposition \ref{2.1} and generalize the result to locally integrable potentials. This generalization broadens the applicability of the framework to a wider class of second-order differential operators, helping bridge a gap in the theory and enabling further analytical and computational developments in both direct and inverse spectral problems.


\end{document}